\DeclareSymbolFont{cyrletters}{OT2}{wncyr}{m}{n}
\DeclareMathSymbol{\Sha}{\mathalpha}{cyrletters}{"58}
\theoremstyle{plain}
\newtheorem{thm}{Theorem}[section]
\newtheorem{lem}[thm]{Lemma}
\newtheorem{prop}[thm]{Proposition}
\newtheorem{cor}[thm]{Corollary}
\theoremstyle{definition}
\newtheorem{defn}[thm]{Definition}
\newtheorem{eg}[thm]{Example}
\theoremstyle{remark}
\newtheorem{rem}[thm]{Remark}
\newcommand{\ZZ}{{\mathbb Z}}
\newcommand{\QQ}{{\mathbb Q}}
\newcommand{\CC}{{\mathbb C}}
\newcommand{\NN}{{\mathbb N}}
\newcommand{\afk}{\mathfrak{a}}
\newcommand{\lfk}{\mathfrak{l}}
\newcommand{\mfk}{\mathfrak{m}}
\newcommand{\Lfk}{\mathfrak{L}}
\newcommand{\Hcal}{\mathcal{H}}
\newcommand{\Mcal}{\mathcal{M}}
\newcommand{\Ocal}{\mathcal{O}}
\newcommand{\Pcal}{\mathcal{P}}
\newcommand{\Tcal}{\mathcal{T}}
\newcommand{\Acal}{\mathcal{A}}
\newcommand{\isom}{\operatorname{Isom}}
\newcommand{\Tot}{\operatorname{Tot}}
\newcommand{\per}{\operatorname{per}}
\begin{document}

\title{Coproduct Formula for Motivic Version of Yamamoto's Integral}
\author{Ku-Yu Fan}
\date{}

\maketitle

\begin{abstract}
    Goncharov proved an explicit formula for the coproduct in the Hopf algebra of motivic iterated integrals. Yamamoto introduced Yamamoto's integral which generalizes iterated integrals and gave a new integral expression for multiple zeta star values using Yamamoto's integral. In this paper, we consider the motivic version of Yamamoto's integral and generalize Goncharov's coproduct formula to those motivic integrals. As an example, we will compute the coproduct of a certain type of Schur multiple zeta values.
\end{abstract}

\tableofcontents

\section{Introduction}
\subsection{Multiple zeta values}
Multiple zeta values are a special family of real numbers that appear in various mathematical fields such as the theory of mixed Tate motives, knot invariant, or evaluation of Feynman diagrams. In recent decades, numerous studies have been conducted on multiple zeta values. The history of multiple zeta values goes back to the time of Leonhard Euler who discovered the famous evaluation formula
$$\zeta(2) = \frac{1}{1^2} + \frac{1}{2^2} + \frac{1}{3^2} + \cdots = \frac{\pi^2}{6}.$$
More generally, he showed that the values of the Riemann zeta function at positive even integers are expressed as
$$\zeta(2k) = \sum_{0<n}\frac{1}{n^{2k}} = \frac{B_{2k}}{2(2k)!}(2\pi i)^{2k}$$
where $B_{2k}$ is the $2k$-th Bernoulli number.
Euler also tried to evaluate the values of the Riemann zeta function at a positive odd integer in terms of $\pi$, and although Euler failed to achieve the original objective, he found different types of evaluation formulas. Let
$$\zeta(k_1,k_2) = \sum_{0<n_1<n_2}\frac{1}{n_1^{k_1} n_2^{k_2}}$$
be a double sum analog of Riemann zate values.  Euler showed that
$$\zeta(3) = \zeta(1,2)$$
and analogous expressions for other Riemann zeta values.
Motivated by Euler's work Hoffman defined multiple zeta values (MZVs in short) as follows.
\begin{defn}[multiple zeta value]
    $$\zeta(k_1, \ldots, k_d)=\sum_{0<n_1<\cdots<n_d} \frac{1}{n_1^{k_1} \cdots n_d^{k_d}} \ \ \ \ (k_1,\ldots,k_{d-1}\in \ZZ_{>0}, k_d\in \ZZ_{>1})$$
\end{defn}
Hoffman started to investigate linear/algebraic relations with rational coefficients among MZVs and proved several families of MZV relations including a special case of the duality relation. A few years later, Kontsevich found an iterated integral expression for MZVs. For example,
$$\zeta(3) = \int_{0<t_1<t_2<t_3<1}\frac{dt_1}{1-t_1}\frac{dt_2}{t_2}\frac{dt_3}{t_3}.$$
In general, we define the iterated integral as follows.
\begin{defn}[iterated integral]
    For $a_0,\ldots, a_{k+1}\in \CC$ with $a_0\neq a_1, a_k\neq a_{k+1}$, and a piecewisely smooth path $\gamma:[0, 1]\rightarrow \CC$ from $a_0$ to $a_{k+1}$ such that $\gamma((0,1)) \subset \CC \setminus \{a_1, \ldots, a_k\}$ (that is, $\gamma$ does not pass through any points of $\{a_1, \ldots, a_k\}$), we define
    $$I_\gamma(a_0;a_1,\ldots,a_k; a_{k+1})$$
    by the iterated integral
    $$\int_{0<t_1<\cdots<t_k<1} \prod_{j=1}^{k}\frac{d\gamma(t_j)}{\gamma(t_j)-a_j} \in \CC.$$
\end{defn}
\begin{rem}
    When the $\gamma$ is not explicitly explained, $\gamma$ is a piecewisely smooth path from $a_0$ to $a_{k+1}$ which does not pass through any poles of differential forms.
\end{rem}
\begin{rem}
    When the $\gamma(t) = a_0(1-t) + a_{k+1}t$, we denote $I_\gamma(a_0;a_1,\ldots,a_k; a_{k+1})$ simply by $I(a_0;a_1,\ldots,a_k; a_{k+1})$.
\end{rem}
\begin{rem}
    MZVs are iterated integrals with $a_i\in \{0,1\}$, $a_0=0, a_1=1, a_k=0, a_{k+1}=1$ and $\gamma(t)=t$. More precisely,
    $$\zeta(k_1, \ldots, k_d) = (-1)^d I(0;1,\{0\}^{k_1-1},\ldots,1,\{0\}^{k_d-1}; 1).$$
\end{rem}

\subsection{Yamamoto's integral}
Yamamoto defined Yamamoto's integral associated with 2-posets, which generalizes the iterated integral and provides a simple representation for multiple zeta star values ([5]). Later, Hirose, Murahara, and Onozuka proved that Yamamoto's integral expression can also be used to express Schur multiple zeta values with constant entries on the diagonals ([4]). Here, we define Yamamoto's integral in a generalized setting and state its basic property ([5], Proposition 2.3).
\begin{defn}[labeled poset]
    $X = (X,\preceq,\delta)$ is called a labeled poset if $X$ is a finite set,
    $$\preceq \subset X\times X$$
    is a partial order, and
    $$\delta:X\to \CC$$
    is a map (this map is called a labeling map of $X$).\\
    We denote the corresponding strict partial order of $\preceq$ by $\prec$.
\end{defn}

A labeled poset $X$ is represented by a Hasse diagram with colored vertices. For example, the diagram
$$\begin{xy}
{(0,-4) \ar @{{*}-o} (4,0)},
{(4,0) \ar @{o-{*}} (8,-4)},
{(8,-4) \ar @{{*}-o} (12,0)},
{(12,0) \ar @{o-o} (16,4)}
\end{xy}$$
represents
\begin{align*}
    X & \coloneqq \{x_1, x_2, x_3, x_4, x_5\}\\
    \preceq & \coloneqq \{(x_1, x_2), (x_3, x_2), (x_3, x_4), (x_4, x_5), (x_3, x_5)\}\\
    \delta(x) & \coloneqq
    \begin{cases}
        0 & \text{ if } x = x_2, x_4, x_5\\
        1 & \text{ if } x = x_1, x_3.
    \end{cases}
\end{align*}

\begin{defn}\label{deflp}
    \begin{enumerate}[(1)]
        \item For $(X,\preceq,\delta)$ and $Y\subset X$, we define labeled subposet as
        $$Y = (Y,\preceq',\delta') \coloneqq (Y,\preceq \cap Y^2,\delta|_Y).$$
        \item For $X = (X,\preceq_X,\delta_X)$ and $Y = (Y,\preceq_Y,\delta_Y)$, we can define the direct sum
        $$X \sqcup Y = (X\sqcup Y, \preceq_{X\sqcup Y}, \delta_{X\sqcup Y}),$$
        where
        $$a\preceq_{X\sqcup Y} b \iff \begin{cases}
        a\preceq_{X} b \ \text{and} \ a,b\in X\\
        a\preceq_{Y} b \ \text{and} \ a,b\in Y,
        \end{cases}$$
        and
        $$\delta_{X\sqcup Y}(a) = \begin{cases}
        \delta_{X}(a) \ \text{if} \ a\in X\\
        \delta_{Y}(a) \ \text{if} \ a\in Y.
        \end{cases}$$
        \item A labeled poset $(X,\preceq,\delta)$ is said to be irreducible if
        $$X \neq Y \sqcup X_{\widehat{Y}},$$
        as a direct sum of posets for all non-empty proper subset $Y$ of $X$ ($\emptyset \subsetneq Y\subsetneq X$), where $X_{\widehat{Y}}$ is the subposet of $X$ restricted to $X\setminus Y$.
        \item For $(X,\preceq,\delta)$ and incomparable elements $a,b\in X$, we define
        $$X_a^b\coloneqq(X,\preceq \cup \{(x,y) |\ x\preceq a \text{ and } b\preceq y\},\delta).$$
        Notice that $X_a^b$ becomes a labeled poset.
        \item  For $(X,\preceq,\delta)$, we define
        $$\Tot(X) \coloneqq \{(X, \preceq', \delta)\ |\ \preceq \subset \preceq' \text{ and } \preceq' \text{ is a total order}\}.$$
    \end{enumerate}
\end{defn}

\begin{defn}[admissible labeled poset]
    For a path $\gamma:[0,1] \to \CC$ such that $\gamma((0,1))\subseteq \CC \setminus \delta(X)$, we say that $X$ is admissible (with respect to $\gamma$) if $\delta(x) \neq \gamma(1)$ for any maximal elements of $X$, and $\delta(x) \neq \gamma(0)$ for any minimal elements of $X$.
\end{defn}

\begin{defn}[Yamamoto's integral]\label{defYI}
    Let $\gamma$ be a path. For an admissible labeled poset $X$, Yamamoto's integral is defined by
    $$I_\gamma(X) \coloneqq \int_{\Delta(X)} \prod_{x\in X}\omega_{\delta(x)}(\gamma(t_x)),$$
    where
    $$\Delta(X) \coloneqq \{\mathbf{t} = (t_x)_{x\in X} \in (0,1)^X\ |\ t_x < t_y \text{ if } x\prec y\},$$
    $$\omega_{a}(t) \coloneqq \frac{dt}{t-a}.$$
\end{defn}

\begin{rem}
    The definition \ref{defYI} slightly differs from that in the original article ([5]), where the labeling map is
    $$\delta:X\to \{0,1\},$$
    the associated $1$-forms are
    $$\omega_0(t) = \frac{dt}{t} \text{ and } \omega_1(t) = \frac{dt}{1-t},$$
    and the path $\gamma:[0,1] \to \CC$ is defined by
    $$\gamma(t) = t.$$
\end{rem}

\begin{prop}\label{propYI}
    Let $X, Y$ be two labeled posets. For a path $\gamma:[0,1] \to \CC$ such that $\gamma((0,1))\subseteq \CC \setminus (\delta_X(X) \cup \delta_Y(Y))$ and incomparable elements $a,b\in X$, we have
    \begin{enumerate}[(1)]
        \item $I_\gamma(X)I_\gamma(Y) = I_\gamma(X\sqcup Y).$
        \item $I_\gamma(X) = I_\gamma(X_a^b)+I_\gamma(X_b^a).$
    \end{enumerate}
\end{prop}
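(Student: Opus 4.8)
The plan is to prove both identities by directly manipulating the domains of integration $\Delta(\cdot)$ while keeping the integrand fixed, since in all three constructions the underlying set and the labeling map $\delta$ are unchanged; only the partial order, and hence the region of integration, varies. Throughout I assume the posets involved are admissible with respect to $\gamma$, so that every integral converges (note that $X_a^b$, $X_b^a$, and $X \sqcup Y$ inherit admissibility, as adding relations only shrinks the sets of maximal and minimal elements).

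For (1), I would first observe that the direct sum imposes no order relation between an element of $X$ and an element of $Y$: by definition $a \preceq_{X\sqcup Y} b$ forces $a,b$ to lie in the same summand. Under the identification $(0,1)^{X\sqcup Y} \cong (0,1)^X \times (0,1)^Y$ this yields the set-theoretic factorization $\Delta(X\sqcup Y) = \Delta(X) \times \Delta(Y)$. Since $\delta_{X\sqcup Y}$ restricts to $\delta_X$ and $\delta_Y$ on the two summands, the integrand splits as $\prod_{x\in X}\omega_{\delta_X(x)}(\gamma(t_x)) \cdot \prod_{y\in Y}\omega_{\delta_Y(y)}(\gamma(t_y))$, a product of a form in the $X$-variables and a form in the $Y$-variables. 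Fubini's theorem then gives $I_\gamma(X\sqcup Y) = I_\gamma(X)I_\gamma(Y)$; to apply it I would check the absolute convergence that admissibility guarantees.

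For (2), the idea is to cut $\Delta(X)$ along the hyperplane $t_a = t_b$. Because $a$ and $b$ are incomparable, the inequalities defining $\Delta(X)$ place no constraint relating $t_a$ and $t_b$, so up to the measure-zero locus $\{t_a = t_b\}$ we have the disjoint decomposition
$$\Delta(X) = \{\mathbf{t}\in\Delta(X) : t_a < t_b\} \sqcup \{\mathbf{t}\in\Delta(X) : t_b < t_a\}.$$
The crux is to identify the first region with $\Delta(X_a^b)$ and, symmetrically, the second with $\Delta(X_b^a)$. Recall that $\preceq_{X_a^b}$ adjoins exactly the pairs $(x,y)$ with $x\preceq a$ and $b\preceq y$. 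Taking $x=a$, $y=b$ shows $a \prec_{X_a^b} b$, whence $\Delta(X_a^b) \subseteq \{t_a < t_b\}$. Conversely, if $\mathbf{t}\in\Delta(X)$ satisfies $t_a<t_b$, then for any adjoined pair $(x,y)$ monotonicity of the coordinates along the chains $x\preceq a$ and $b\preceq y$ gives $t_x \le t_a < t_b \le t_y$, so $t_x < t_y$ and every new relation holds; thus $\mathbf{t}\in\Delta(X_a^b)$. Hence $\Delta(X_a^b) = \{\mathbf{t}\in\Delta(X): t_a<t_b\}$. As the integrand is literally the same top-degree form in all three cases, additivity of the integral over the decomposition above yields $I_\gamma(X) = I_\gamma(X_a^b) + I_\gamma(X_b^a)$.

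The main obstacle is precisely this domain identification in (2): verifying that imposing the single inequality $t_a<t_b$ on $\Delta(X)$ automatically enforces all of the possibly many comparabilities added in forming $X_a^b$. This is the monotonicity-along-chains observation above, and it implicitly uses that $\preceq_{X_a^b}$ is genuinely a partial order, whose transitivity relies on the incomparability of $a$ and $b$ to prevent the new relations from forcing $b\preceq a$. The only analytic point, that $\{t_a = t_b\}$ contributes nothing, is routine, since this set has Lebesgue measure zero and the integrand is smooth and integrable away from the poles of the $\omega_{\delta(x)}$.
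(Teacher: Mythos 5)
Your proof is correct and follows essentially the same route as the paper: factor the domain $\Delta(X\sqcup Y)=\Delta(X)\times\Delta(Y)$ for (1), and decompose $\Delta(X)$ as $\Delta(X_a^b)\sqcup\Delta(X_b^a)$ (up to the null set $\{t_a=t_b\}$) for (2). You actually supply the chain-monotonicity verification of $\Delta(X_a^b)=\{\mathbf{t}\in\Delta(X):t_a<t_b\}$ that the paper asserts without proof, which is a welcome addition.
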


\begin{proof}
    \begin{enumerate}[(1)]
        \item By Definition \ref{defYI}, we have
        \begin{align*}
            I_\gamma(X)I_\gamma(Y) & = \int_{\Delta(X)} \prod_{x\in X}\omega_{\delta_X(x)}(\gamma(t_x)) \int_{\Delta(Y)} \prod_{y\in Y}\omega_{\delta_Y(y)}(\gamma(t_y))\\
            & = \int_{\Delta(X) \times \Delta(Y)} \prod_{x\in X}\omega_{\delta_X(x)}(\gamma(t_x))\prod_{y\in Y}\omega_{\delta_Y(y)}(\gamma(t_y)).
        \end{align*}
        By Definition \ref{deflp}, we have
        $$\Delta(X) \times \Delta(Y) = \Delta(X\sqcup Y).$$
        Hence, we get
        $$I_\gamma(X)I_\gamma(Y) = \int_{\Delta(X\sqcup Y)} \prod_{x\in X\sqcup Y}\omega_{\delta_{X\sqcup Y}(x)}(\gamma(t_x)) = I_\gamma(X\sqcup Y).$$
        \item By Definition \ref{defYI}, we have
        \begin{align*}
            I_\gamma(X_a^b) + I_\gamma(X_b^a) & = \int_{\Delta(X_a^b)} \prod_{x\in X_a^b}\omega_{\delta_{X_a^b}(x)}(\gamma(t_x)) + \int_{\Delta(X_b^a)} \prod_{x'\in X_b^a}\omega_{\delta_{X_b^a}(x')}(\gamma(t_{x'}))\\
            & = \int_{\Delta(X_a^b)} \prod_{x\in X}\omega_{\delta_{X}(x)}(\gamma(t_x)) + \int_{\Delta(X_b^a)} \prod_{x'\in X}\omega_{\delta_{X}(x')}(\gamma(t_{x'}))\\
            & = \int_{\Delta(X_a^b)\sqcup \Delta(X_b^a)} \prod_{x\in X}\omega_{\delta_{X}(x)}(\gamma(t_x)).
        \end{align*}
        By Definition \ref{deflp}, we have
        $$\Delta(X_a^b) \sqcup \Delta(X_b^a) = \Delta(X).$$
        Hence, we get
        $$I_\gamma(X_a^b) + I_\gamma(X_b^a) = \int_{\Delta(X)} \prod_{x\in X}\omega_{\delta_X(x)}(\gamma(t_x)) = I_\gamma(X).$$
    \end{enumerate}
\end{proof}

Repeated use of (2) of Proposition \ref{propYI} yields an expression for Yamamoto's integral as a sum of iterated integrals:
$$I_\gamma(X) = \sum_{Y \in \Tot(X)} I_\gamma(Y).$$

\subsection{Motivic iterated integral}
Motive is a concept in algebraic geometry proposed by Alexander Grothendieck in the 1960s. In particular, the theory of mixed Tate motives has intricate and multifaceted relationships with MZVs. Deligne and Goncharov defined the motivic iterated integrals, which are elements in the commutative ring
$$\Pcal \coloneqq \Ocal(\isom_{\Mcal \Tcal(\QQ)}^\otimes (\omega_{\text{dR}}, \omega_{\text{B}}))$$
of periods obtained from the comparison isomorphism between the fiber functors $\omega_{\text{dR}}$ and $\omega_{\text{B}}$ of the Tannakian category $\Mcal \Tcal (\QQ)$ of mixed Tate motives over $\QQ$ (here, $\isom_{\Mcal \Tcal(\QQ)}^\otimes (\omega_{\text{dR}}, \omega_{\text{B}})$ denotes the affine scheme of tensor isomorphisms from $\omega_{\text{dR}}$ to $\omega_{\text{B}}$). It is known that there is a ring homomorphism
$$\per : \Pcal \rightarrow \CC$$
called period map, and period map $\per$ is speculated to be injective. For $k \geq 0$, $a_0, \ldots, a_{k+1} \in \QQ$ and a piecewisely smooth path $\gamma:[0, 1] \to \CC$ from $a_0$ to $a_{k+1}$ such that $\gamma((0,1)) \subset \CC \setminus \{a_1, \ldots, a_k\}$ and $\gamma'(0),\gamma'(1)\in \QQ\setminus \{0\}$, one can define the motivic iterated integral
$$I^\mfk_\gamma(a_0; a_1, \ldots, a_k; a_{k+1})$$
which is mapped to $I_\gamma(a_0; a_1, \ldots, a_k; a_{k+1})$ under the period map, and thus the motivic multiple zeta values are defined by
$$\zeta^\mfk(k_1, \ldots, k_d) \coloneqq (-1)^d I^\mfk_\gamma(0;1,\{0\}^{k_1-1},\ldots,1,\{0\}^{k_d-1}; 1).$$
Motivic iterated integral $I^m_\gamma$ satisfy various properties. Here, we list some of them which we will use in later arguments.
\begin{prop}\label{propMI}
    \begin{enumerate}[(1)]
        \item $I^\mfk_\gamma(a_0; a_1) = 1$.
        \item $I^\mfk_\gamma(a_0; a_1, \ldots, a_k; a_{k+1}) = (-1)^kI^\mfk_{\gamma^{-1}}(a_{k+1}; a_k, \ldots, a_1; a_0).$
        \item Motivic iterated integrals $I^\mfk_\gamma(a; \alpha; b)$ satisfy shuffle relation i.e.
        $$I^\mfk_\gamma(a; \alpha; b) \cdot I^\mfk_\gamma(a; \beta; b) = I^\mfk_\gamma(a; \alpha \shuffle \beta; b).$$
        \item Motivic iterated integrals $I^\mfk_{\gamma}(a; \omega; b)$ satisfy path composition formula i.e.
        $$I^\mfk_{\gamma \gamma'}(a; \omega; b) = \sum_{\alpha\beta = \omega} I^\mfk_{\gamma}(a; \alpha; c) \cdot I^\mfk_{\gamma'}(c; \beta; b),$$
        where $\gamma \gamma'$ is the composition of $\gamma$ and $\gamma'$.
    \end{enumerate}
\end{prop}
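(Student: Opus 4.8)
The plan is to read all four identities as the motivic shadows of classical facts about iterated integrals, and to establish them at the level where the motivic objects are genuinely defined: as coordinates of the canonical group-like element in the (pro-unipotent, de Rham) motivic fundamental groupoid of $\PP^1\setminus\{a_1,\dots,a_k\}$ attached to $\Mcal\Tcal(\QQ)$. Concretely, I would package the integrals into the generating series $\sum_{w} I^\mfk_\gamma(a_0; w; a_{k+1})\, w$ over words $w$ in the letters $\{a_1,\dots,a_k\}$, observe that this series is group-like for the deconcatenation coproduct, and then read off each of (1)--(4) from a purely formal property of group-like series together with the functoriality of the motivic groupoid. The advantage of routing everything through the groupoid is that the four statements become statements about morphisms in $\Mcal\Tcal(\QQ)$, so they descend automatically from the de Rham and Betti realizations to $\Pcal$ and are compatible with $\per$.

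For (1) and (2): statement (1) is the normalization that the empty word (the case $k=0$) is sent to the unit of $\Pcal$, which I would take as part of the definition of the canonical element or verify directly. For (2) I would use that reversing the path, $\gamma\mapsto\gamma^{-1}$, corresponds to inversion in the groupoid, so the group-like series attached to $\gamma^{-1}$ is the inverse of the one attached to $\gamma$. Since the series is group-like, its inverse equals the value of the antipode, and the antipode sends a word of length $k$ to $(-1)^k$ times its reversal; comparing the coefficient of the reversed word $a_k,\dots,a_1$ then yields exactly $I^\mfk_\gamma(a_0;a_1,\dots,a_k;a_{k+1}) = (-1)^k I^\mfk_{\gamma^{-1}}(a_{k+1};a_k,\dots,a_1;a_0)$.

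For (3) and (4): the shuffle relation (3) asserts that $w\mapsto I^\mfk_\gamma(a;w;b)$ is a character of the shuffle algebra, which is dual to the group-likeness of the generating series for the deconcatenation coproduct. I would prove it by recalling that the holonomy series of a (motivic) path is group-like — a property true in every fiber functor and preserved under the comparison, hence valid in $\Pcal$ — together with the formal fact that group-like series are precisely those whose coefficients obey the identities $I^\mfk_\gamma(a;\alpha;b)\,I^\mfk_\gamma(a;\beta;b) = I^\mfk_\gamma(a;\alpha\shuffle\beta;b)$. The path composition formula (4) is the translation of the groupoid composition $\gamma\gamma'$ into the product of the two group-like series attached to $\gamma$ and $\gamma'$; extracting the coefficient of the word $\omega$ from that product produces precisely the sum over deconcatenations $\alpha\beta=\omega$ with the intermediate base point $c$.

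The hard part will be the rigorous lift from the realizations to the motivic level, not the formal combinatorics. Because $I^\mfk_\gamma$ is an abstract period rather than a literal integral, I cannot manipulate domains of integration (no direct Fubini, no substitution $t\mapsto 1-t$); every step must be realized as a morphism of mixed Tate motives. The two genuinely delicate points are ensuring in (2) that the tangential base points at the endpoints — encoded by the conditions $\gamma'(0),\gamma'(1)\in\QQ\setminus\{0\}$ — transform correctly under inversion, and ensuring in (4) that the intermediate point $c$ and the two broken paths remain admissible rational (or tangential) base points so that $\gamma\gamma'$ stays inside the motivic framework. Once these base-point compatibilities are verified, (1)--(4) follow formally, and I would expect to invoke the foundational construction of the motivic fundamental groupoid of Deligne--Goncharov for its existence and functoriality rather than reprove it here.
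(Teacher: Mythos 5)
The paper offers no proof of this proposition: it is presented as a list of known properties of motivic iterated integrals, implicitly carried by the references to Goncharov and Deligne--Goncharov, so there is nothing internal to compare your argument against. Your sketch is the standard route and is correct in outline: packaging the $I^\mfk_\gamma(a_0;w;a_{k+1})$ into the group-like holonomy series of the motivic torsor of paths, reading (1) as the normalization of the empty word, (2) as groupoid inversion combined with the antipode $w\mapsto(-1)^{|w|}\tilde w$ of the shuffle Hopf algebra, (3) as the equivalence between group-likeness and being a shuffle character, and (4) as coefficient extraction from the product of the two series under groupoid composition. You also correctly identify where the real content lies, namely the existence and functoriality of the motivic fundamental groupoid with tangential base points and the compatibility of the intermediate base point $c$ in (4); but you defer exactly these points to Deligne--Goncharov rather than proving them, so what you have is an accurate reduction of the proposition to the foundational literature rather than a self-contained proof. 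Since the paper itself makes the same reduction (by citation, without even the formal bookkeeping you supply), your proposal is, if anything, more explicit than the source; just be aware that a referee asking for a genuine proof of (2) or (4) would want the base-point and admissibility issues you flag actually carried out, not merely named.
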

Now, we consider $(X,\preceq,\delta)$ with the labeling map $\delta: X\to \QQ$,  and define the motivic version of Yamamoto's integral as follows.
\begin{defn}\label{def}
    Let $X$ be a labeled poset with the labeling map
    $$\delta:X\to \QQ.$$
    Motivic version of Yamamoto's integral is defined by
    $$I^\mfk_\gamma(X) \coloneqq \sum_{Y \in \Tot(X)} I^\mfk_\gamma(Y),$$
    where $I^\mfk_\gamma(Y)$ is the motivic iterated integral, which corresponds to the iterated integral $I_\gamma(Y)$.
\end{defn}

\subsection{Coaction formula of motivic iterated integrals}
Let $\Acal$ be the quotient $\Pcal /\mu \Pcal$ and $\pi: \Pcal \rightarrow \Acal$ the natural projection to the quotient, where $\mu$ is the motivic version of $2\pi i$. By the theory of mixed Tate motives, there is a natural motivic coaction
$$\Delta : \Pcal \rightarrow \Acal \otimes \Pcal.$$
Let
$$\Hcal \coloneqq \langle I^\mfk_\gamma(a_0; a_1, \ldots, a_k; a_{k+1}) \rangle _{\QQ} \subset \Pcal$$
be the subspace spanned by motivic iterated integrals over $\QQ$. Then, $\Hcal$ becomes a $\QQ$-subalgebra by the shuffle product formula. Notice that the image $\pi(I^\mfk_\gamma(a_0; a_1, \ldots, a_k; a_{k+1}))$ depends only on the endpoints $a_0$ and $a_{k+1}$ of the path, so we drop $\gamma$ from the notation and denoted it as $I^\afk(a_0; a_1, \ldots, a_k; a_{k+1})$.
The coaction of $I^\mfk_\gamma(a_0; a_1, \ldots, a_k; a_{k+1})$ is given by the following formula.
\begin{thm}[|8|, Theorem 2.4; |2|, Theorem 1.2]
    We have
    \begin{align*}
        \Delta(I_\gamma^\mfk(a_0; a_1, \ldots, a_k; a_{k+1}))\\
        = \sum_{s=0}^{k} \sum_{\substack{i_0<\cdots<i_{s+1}\\ i_0 = 0, i_{s+1} = k+1}} \prod_{p=0}^{s} I^{\afk}(a&_{i_p};a_{i_p+1},\ldots,a_{i_{p+1}-1}; a_{i_{p+1}}))\otimes I_\gamma^{\mfk}(a_{i_0};a_{i_1},\ldots,a_{i_{s}}; a_{i_{s+1}}),
    \end{align*}
\end{thm}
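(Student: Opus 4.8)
The plan is to prove the formula by induction on the weight $k$, using two structural inputs: that the motivic coaction $\Delta$ is a $\QQ$-linear algebra homomorphism, and that it commutes with the de Rham differential once the punctures $a_0,\dots,a_{k+1}$ are allowed to vary (so that the whole identity is read as an identity of $\Acal\otimes\Pcal$-valued functions of the $a_i$, to be specialized at the end to the given rational points). The base case $k=0$ is immediate: by Proposition \ref{propMI} (1) the left-hand side is $\Delta(1)=1\otimes1$, and on the right only the summand $s=0$, $(i_0,i_1)=(0,1)$ occurs, with empty middle strings, so it contributes $I^{\afk}(a_0;a_1)\otimes I^{\mfk}_\gamma(a_0;a_1)=1\otimes1$ by the same vanishing.

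For the inductive step I would differentiate both sides in the variables $a_i$ and invoke the standard differentiation formula for (motivic) iterated integrals,
\[
d\,I^{\mfk}_\gamma(a_0;a_1,\dots,a_k;a_{k+1})=\sum_{i=1}^{k} I^{\mfk}_\gamma(a_0;a_1,\dots,\widehat{a_i},\dots,a_k;a_{k+1})\,d\log\frac{a_{i+1}-a_i}{a_{i-1}-a_i},
\]
whose summands all have weight $k-1$. Since $\Delta$ commutes with $d$ and is $\QQ$-linear, applying it to the left-hand side yields $d\,\Delta(\mathrm{LHS})=\sum_{i}\Delta\big(I^{\mfk}_\gamma(a_0;\dots,\widehat{a_i},\dots;a_{k+1})\big)\,d\log\frac{a_{i+1}-a_i}{a_{i-1}-a_i}$, and the inductive hypothesis turns each $\Delta(\cdots)$ into an explicit subsequence sum of weight $k-1$.

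The core of the argument is to differentiate the claimed right-hand side directly, by the Leibniz rule, and to check that it reproduces the same expression. Here $d$ acts on each factor through the same formula: it can hit the right tensor factor $I^{\mfk}_\gamma(a_{i_0};a_{i_1},\dots,a_{i_s};a_{i_{s+1}})$, erasing one of the marked points, or it can hit one of the left factors $I^{\afk}(a_{i_p};a_{i_p+1},\dots,a_{i_{p+1}-1};a_{i_{p+1}})$, erasing a point from a gap. Matching these contributions against the differentiated left-hand side is a purely combinatorial bookkeeping of $d\log$ terms, and this is the step I expect to be the main obstacle: when a deleted index sits at the boundary between a gap and an adjacent marked point, the two logarithmic forms produced must telescope so as to recombine into the single factor $d\log\frac{a_{i+1}-a_i}{a_{i-1}-a_i}$ attached to the \emph{full} sequence. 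This telescoping is exactly the content of Goncharov's ``semicircle polygon'' encoding, in which each term of the coproduct is a dissection of a polygon with vertices $a_0,\dots,a_{k+1}$ and differentiation corresponds to collapsing an edge; I would organize the matching through that picture.

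Once the derivatives agree, the difference of the two sides is $d$-closed in the $a_i$, hence constant, and I would pin down the constant to be $0$ by specializing to a degenerate configuration (for instance letting the marked points coalesce with an endpoint) and using Proposition \ref{propMI} (1) together with the path-composition formula (4). The remaining points are routine given the setup in the excerpt: the tangential base-point hypotheses $\gamma'(0),\gamma'(1)\in\QQ\setminus\{0\}$ guarantee that the motivic integrals are defined, and the passage to $\Acal=\Pcal/\mu\Pcal$ is what licenses writing the left-hand factors with the path-independent symbol $I^{\afk}$, as already noted before the statement. As a consistency check one can verify that the proposed formula is coassociative, which both corroborates the combinatorics and reflects that $\Delta$ underlies a genuine comodule structure.
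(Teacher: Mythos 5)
This theorem is not proved in the paper at all: it is imported verbatim from Brown [8, Theorem 2.4] and Goncharov [2, Theorem 1.2], so there is no in-paper argument to compare yours against. Judged on its own terms, your sketch follows a recognizable and legitimate strategy -- induction on the weight via the first-order differential equation
\[
d\,I^{\mfk}(a_0;a_1,\dots,a_k;a_{k+1})=\sum_{i=1}^{k} I^{\mfk}(a_0;\dots,\widehat{a_i},\dots;a_{k+1})\,d\log\frac{a_{i+1}-a_i}{a_{i-1}-a_i},
\]
horizontality of the coaction, and determination of the constant of integration at a degenerate configuration -- and your base case is correct. This is close in spirit to arguments Goncharov actually uses.

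As a proof, however, it has two genuine gaps. First, the step you yourself flag as the main obstacle -- differentiating the right-hand side by the Leibniz rule and checking that the boundary $d\log$ terms telescope into the single form attached to the full sequence -- is exactly where the content of the theorem lives at this level of the argument, and you do not carry it out; calling it ``purely combinatorial bookkeeping'' organized by the polygon picture is a plan, not a verification (one must, in particular, say how $d$ acts on the $\Acal$-factors versus the $\Pcal$-factor and track the cases where the erased point is adjacent to a marked point $a_{i_p}$). Second, the structural inputs you invoke are themselves substantial theorems requiring proof or precise citation: to ``differentiate in the $a_i$'' motivically one must realize the $I^{\mfk}$ as sections of a family over the configuration space of the $a_i$ (mixed Tate motives over a moduli space, with tangential base points at the degenerations), and the assertion that $\Delta$ commutes with this differential is the horizontality of the coaction for the Gauss--Manin connection -- none of which follows from the properties listed in Proposition \ref{propMI}. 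Likewise ``$d$-closed hence constant'' needs connectedness of the relevant parameter space and compatibility of specialization/regularization with $\Delta$. With those inputs supplied (they are in Goncharov [2]) and the combinatorial matching executed, the argument would close; in its present form it is an outline of the known proof rather than a proof.
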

It is known that $\Acal$ has a structure of graded algebra i.e., $\Acal = \bigoplus_{k \geq 0} \Acal_k$, and we let $\Acal_{>0} \coloneqq \bigoplus_{k > 0} \Acal_k$ and $\Lfk \coloneqq \Acal/\Acal_{>0}^2$. We denote by $\pi'$ the natural projection from $\Acal$ to $\Lfk$ and define
$$I^{\lfk}(a_{i_p};a_{i_p+1},\ldots,a_{i_{p+1}-1}; a_{i_{p+1}}) \coloneqq \pi' (I^{\afk}(a_{i_p};a_{i_p+1},\ldots,a_{i_{p+1}-1}; a_{i_{p+1}})).$$
\begin{defn}[|8|]
    For $r\in \NN$, the infinitesimal coaction $D_r: \Hcal \rightarrow \Lfk \otimes \Hcal$ is defined by
    \begin{align*}
        D_r(I_\gamma^\mfk(a_0; a_1, \ldots, a_k; a_{k+1}))\\
        = \sum_{s=0}^{k-r} I^{\lfk}(a_{s};a_{s+1},\ldots,a_{s+r};& a_{s+r+1}))\otimes I_\gamma^{\mfk}(a_{0};a_{1}, \ldots, a_{s}, a_{s+r+1}, \ldots ,a_{k}; a_{k+1}),
    \end{align*}
\end{defn}
In this article, we also consider the following.
\begin{defn}
    For $r\in \NN$, $D_r': \Hcal \rightarrow \Acal \otimes \Hcal$ is defined by
    \begin{align*}
        D_r'(I_\gamma^\mfk(a_0; a_1, \ldots, a_k; a_{k+1}))\\
        = \sum_{s=0}^{k-r} I^{\afk}(a_{s};a_{s+1},\ldots,a_{s+r};& a_{s+r+1}))\otimes I_\gamma^{\mfk}(a_{0};a_{1}, \ldots, a_{s}, a_{s+r+1}, \ldots ,a_{k}; a_{k+1}).
    \end{align*}
\end{defn}
\begin{rem}
    By viewing $\Delta$ and $D_r'$ as endomorphisms on $\Acal\otimes \Hcal$ that map $a\otimes h \in \Acal\otimes \Hcal$ to $(a\otimes 1)\cdot \Delta(h)$ and $(a\otimes 1)\cdot D_r'(h)$, they are related by
    $$\exp \left( \sum_{r = 1}^{\infty}D_r' \right) = \Delta.$$
\end{rem}

\section{Main theorems}
\subsection{Notations}
In order to state our main theorem, we will define some symbols. To describe the endpoints of the integrals, we define the extension of $X$ as follow.
\begin{defn}
    For a labeled poset $X = (X, \preceq, \delta)$ and a path $\gamma$, we define the extension of $X$ with respect to $\gamma$ as
    $$\widetilde{X}_{\gamma} \coloneqq (\widetilde{X}, \widetilde{\preceq}, \widetilde{\delta}),$$
    where
    \begin{align*}
        \widetilde{X} & \coloneqq X\cup \{x_0, x_1\}\\
        \widetilde{\preceq} & \coloneqq \preceq \cup \{(x_0, x)|x\in \widetilde{X}\} \cup \{(x, x_1)|x\in \widetilde{X}\}\\
        \widetilde{\delta}(x) & \coloneqq
        \begin{cases}
            \delta(x) & \text{ if } x\in X\\
            \gamma(0) & \text{ if } x = x_0\\
            \gamma(1) & \text{ if } x = x_1.
        \end{cases}
    \end{align*}
    When the path $\gamma$ is clear from the context, we drop the $\gamma$ from the notation.
\end{defn}

\begin{eg}
    Let $X = (X, \preceq, \delta)$ be the labeled poset
    \begin{align*}
        X & \coloneqq \{x_1, x_2, x_3\}\\
        \preceq & \coloneqq \{(x_1, x_2), (x_3, x_2)\}\\
        \delta(x) & \coloneqq
        \begin{cases}
            0 & \text{ if } x = x_2\\
            1 & \text{ if } x = x_1, x_3
        \end{cases}
    \end{align*}
    and $\gamma:[0,1] \rightarrow \CC$ be the path $\gamma(t) = t$.
    Then, $X$ and $\widetilde{X}$ are depicted as
    $$\begin{xy}
    {(0,-4) \ar @{{*}-o} (4,0)},
    {(4,0) \ar @{o-{*}} (8,-4)}
    \end{xy}$$
    and
    $$\begin{xy}
    {(0,-4) \ar @{{*}-o} (4,0)},
    {(4,0) \ar @{o-{*}} (8,-4)},
    {(0,-4) \ar @{{*}-o} (4,-8)},
    {(4,-8) \ar @{o-{*}} (8,-4)},
    {(4,0) \ar @{o-{*}} (4,5.656)}
    \end{xy}$$
    respectively.
\end{eg}
To describe the endpoints of the integrals, we define the following subsets.
\begin{defn}
    For posets $Y\subset X$, we define the subsets
    $$Y^{\prec X} \coloneqq Y^{\downarrow X} \setminus Y = \bigcup_{y\in Y} y^{\downarrow X} \setminus Y = \bigcup_{y\in Y} \{x\in X\,|\,x \preceq y\} \setminus Y$$
    $$Y^{\succ X} \coloneqq Y^{\uparrow X} \setminus Y = \bigcup_{y\in Y} y^{\uparrow X} \setminus Y = \bigcup_{y\in Y} \{x\in X\,|\,x \succeq y\} \setminus Y,$$
    and their subsets
    $$Y^{\rightarrow X} \coloneqq \{p\in Y^{\prec X}\ | \ \nexists\  l\in Y^{\prec X} \ \text{s.t.}\ p\prec l \}$$
    $$Y^{\leftarrow X} \coloneqq \{q\in Y^{\succ X}\ | \ \nexists\  u\in Y^{\succ X} \ \text{s.t.}\ u\prec q \},$$
    where $y^{\uparrow X}$ and $y^{\downarrow X}$ (resp. $Y^{\uparrow X}$ and $Y^{\downarrow X}$) denote the upper and lower closures of $y$ (resp. $Y$).\\
    In the following, we will use $y^{\succeq X}$ and $y^{\preceq X}$ (resp. $Y^{\succeq X}$ and $Y^{\preceq X}$)  instead of $y^{\uparrow X}$ and $y^{\downarrow X}$ (resp. $Y^{\uparrow X}$ and $Y^{\downarrow X}$).
    We will also use the following notations.
    \begin{enumerate}[(1)]
        \item We denote $\{y\}^{\prec X}$ (resp. $\{y\}^{\succ X}, \{y\}^{\rightarrow X}, \{y\}^{\leftarrow X}$) by $y^{\prec X}$ (resp. $y^{\succ X}, y^{\rightarrow X}, y^{\leftarrow X}$).
        \item We denote $x\in Y^{\rightarrow X}$ (resp. $x\in Y^{\leftarrow X}$) by $x \rightarrow_{X} Y$ or $Y \leftarrow_{X} x$ (resp. $x \leftarrow_{X} Y$ or $Y \rightarrow_{X} x$).
        \item When $Y^{\rightarrow X}$ (resp. $Y^{\leftarrow X}$) is a singleton we denote its unique element by $\rightarrow_{X} Y$ (resp. $\leftarrow_{X} Y$).
        \item When the set $X$ is clear, we denote $\rightarrow_{X}$ and $\leftarrow_{X}$ simply by $\rightarrow$ and $\leftarrow$.
    \end{enumerate}
\end{defn}

\begin{eg}
    Let $X = (X, \preceq, \delta)$ be the labeled poset
    \begin{align*}
        X & \coloneqq \{x_1, x_2, x_3, x_4, x_5\}\\
        \preceq & \coloneqq \{(x_1, x_2), (x_2, x_4), (x_4, x_5), (x_1, x_3), (x_3, x_4), (x_1, x_4), (x_2, x_5), (x_3, x_5), (x_1, x_5)\}\\
        \delta(x) & \coloneqq
        \begin{cases}
            0 & \text{ if } x = x_1, x_4\\
            1 & \text{ if } x = x_2, x_3, x_5,
        \end{cases}
    \end{align*}
    i.e.,
    $$\begin{xy}
    {(0,-4) \ar @{{*}-o} (4,0)},
    {(4,0) \ar @{o-{*}} (8,-4)},
    {(0,-4) \ar @{{*}-o} (4,-8)},
    {(4,-8) \ar @{o-{*}} (8,-4)},
    {(4,0) \ar @{o-{*}} (4,5.656)}
    \end{xy}.$$
    Let $Y$ be the subset $\{x_2, x_4\}$ of $X$. Then, we have $Y^{\prec X} = \{x_1, x_3\}$, $Y^{\succ X} = \{x_5\}$, $Y^{\rightarrow X} = \{x_3\}$ and $Y^{\leftarrow X} = \{x_5\}$.
\end{eg}
To state the formula for $D_r$, we define a set $X_r$ as follow.
\begin{defn}
    Let $X = (X, \preceq, \delta)$ be a labeled poset. For $r\in \NN$, we define
    $$X_r \coloneqq \{Y = (Y, \preceq_{Y}, \delta_{Y}) \subset X\ |\ |Y|=r, \ \forall\  x\in(X\setminus Y)\ \nexists\  y, y'\in Y \ \text{s.t.}\ y\preceq x\preceq y'\}.$$
\end{defn}
To state the right hand side of tensor product of formula, we define a set $X_{(p, \widehat{Y}, q)}$ as follow.
\begin{defn}
    Let $X = (X, \preceq, \delta)$ be a labeled poset. Fix $Y\in X_r$. For $p\rightarrow Y$ and $q\leftarrow Y$, we define
    $$X_{(p, \widehat{Y}, q)} \coloneqq (X_{\widehat{Y}}, \preceq_{(p, \widehat{Y}, q)}, \delta_{\widehat{Y}})$$
    where
    \begin{align*}
        X_{\widehat{Y}} \coloneqq & X\setminus Y\\
        \preceq_{(p, \widehat{Y}, q)} \coloneqq & \preceq \setminus \{(a,b)\in \preceq\,|\,a\in Y \text{ or } b\in Y\}\\
        & \cup \{(a,b)\in X_{\widehat{Y}}\times X_{\widehat{Y}}\,|\,a\in Y^{\prec X},\ p\preceq b\}\\
        & \cup \{(a,b)\in X_{\widehat{Y}}\times X_{\widehat{Y}}\,|\,b\in Y^{\succ X},\ a\preceq q\}\\
        \delta_{\widehat{Y}} \coloneqq & \delta|_{X_{\widehat{Y}}}
    \end{align*}
    When $|Y^{\rightarrow X}| = |Y^{\leftarrow X}| = 1$, we denote $X_{(p, \widehat{Y}, q)}$ simply by $X_{\widehat{Y}}$.\\
    We denote $X_{(p, \widehat{\{y\}}, q)}$ (resp. $X_{\widehat{\{y\}}}$) simply by $X_{(p, \widehat{y}, q)}$ (resp. $ X_{\widehat{y}}$).\\
\end{defn}

\begin{eg}
    Let $X$ be the labeled poset
    $$\begin{xy}
    {(0,-4) \ar @{{*}-o} (4,0)},
    {(4,0) \ar @{o-{*}} (8,-4)},
    {(4,0) \ar @{o-{*}} (4,5.656)},
    {(0,4) \ar @{o-o} (4,0)},
    {(4,0) \ar @{o-o} (8,4)},
    {(4,0) \ar @{o-o} (4,-5.656)}
    \end{xy}$$
    and $Y$ be the center of the diagram. For the bottom white vertex $p$ and the top black vertex $q$, $X_{(p, \widehat{Y}, q)}$ is depicted as follows:
    $$\begin{xy}
    {(0,-6.828) \ar @{{*}-o} (4,-2.828)},
    {(4,-2.828) \ar @{o-{*}} (8,-6.828)},
    {(4,-2.828) \ar @{o-{*}} (4,2.828)},
    {(0,6.828) \ar @{o-{*}} (4,2.828)},
    {(4,2.828) \ar @{{*}-o} (8,6.828)}
    \end{xy}$$
\end{eg}

\begin{defn}
    Let $X$ be a labeled poset. For $Y\in X_r$ and $\overline{Y} = (\overline{Y}, \preceq_{\overline{Y}}, \delta_{\overline{Y}}) \in \Tot(Y)$, we define
    $$\Tot_{\overline{Y}}(X) \coloneqq \{\overline{X} = (\overline{X}, \preceq_{\overline{X}}, \delta_{\overline{X}}) \in \Tot(X)\,|\,\overline{Y}\in \overline{X}_r, \preceq_{\overline{Y}} \subset \preceq_{\overline{X}}\}$$
    Notice that when $r=1$, we have $\Tot_{\overline{Y}}(X) = \Tot(X)$.\\
    Further for $p\rightarrow Y$ and $q \leftarrow Y$, we define
    $$\Tot_{p,\overline{Y},q}(X) \coloneqq \{\overline{X}\in \Tot_{\overline{Y}}(X) \,|\, \preceq_{(p,\widehat{Y},q)}\subset \preceq_{\overline{X}}\}$$
\end{defn}

\begin{defn}
Let $X$ be a labeled poset. For $Y \subset X$, we define $P_{Y}(X) = X_{\widehat{Y}}$.
\end{defn}

\begin{defn}
    For $Y\subset X$, $Y_1,Y_2\in \Tot(Y)$ and $\overline{X}\in \Tot_{Y_1}(X)$, we define
    $$T_{Y_1,Y_2}(\overline{X}) = (X,(\preceq_{\overline{X}}\setminus \preceq_{Y_1}) \cup \preceq_{Y_2},\delta).$$
    Notice that in general $T_{Y_1,Y_2}(\overline{X})$ is not a poset.
\end{defn}

\subsection{A formula for $D_r$}
\begin{lem}\label{lem1}
    Let $X$ be a labeled poset. For $Y\in X_r$ and $\overline{Y}\in \Tot(Y)$, $\{\Tot_{p,\overline{Y},q}(X)\}_{\substack{p\rightarrow Y \\ q \leftarrow Y}}$ forms a partition of $\Tot_{\overline{Y}}(X)$\\
\end{lem}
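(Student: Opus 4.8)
The plan is to reduce the statement to one characterization of membership and then read off both properties of a partition from it. Concretely, I claim that for $\overline{X}\in\Tot_{\overline{Y}}(X)$ and indices $p\rightarrow Y$, $q\leftarrow Y$,
\[
\overline{X}\in\Tot_{p,\overline{Y},q}(X)\ \Longleftrightarrow\ p=\max_{\preceq_{\overline{X}}}Y^{\prec X}\ \text{ and }\ q=\min_{\preceq_{\overline{X}}}Y^{\succ X},
\]
that is, $p$ (resp. $q$) is the largest (resp. smallest) element of $Y^{\prec X}$ (resp. $Y^{\succ X}$) for the total order $\preceq_{\overline{X}}$. Granting this, both partition axioms are immediate: a finite nonempty totally ordered set has a unique maximum and a unique minimum, so the pair $(p,q)$ is uniquely determined by $\overline{X}$, giving pairwise disjointness of the blocks; and setting $p:=\max_{\preceq_{\overline{X}}}Y^{\prec X}$, $q:=\min_{\preceq_{\overline{X}}}Y^{\succ X}$ exhibits each $\overline{X}\in\Tot_{\overline{Y}}(X)$ as a member of exactly one block, giving the covering.

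To prove the characterization I would first check that these extrema are legitimate indices. If $p=\max_{\preceq_{\overline{X}}}Y^{\prec X}$ and some $l\in Y^{\prec X}$ had $p\prec l$, then $p\prec_{\overline{X}}l$ since $\preceq_{\overline{X}}$ extends $\preceq$, contradicting maximality; hence $p$ is maximal in $Y^{\prec X}$, i.e. $p\rightarrow Y$, and symmetrically $q\leftarrow Y$. For the forward implication, membership in $\Tot_{p,\overline{Y},q}(X)$ forces every generating relation of $\preceq_{(p,\widehat{Y},q)}$ into $\preceq_{\overline{X}}$; taking $b=p$ gives $(a,p)\in\preceq_{\overline{X}}$ for all $a\in Y^{\prec X}$, and taking $a=q$ gives $(q,b)\in\preceq_{\overline{X}}$ for all $b\in Y^{\succ X}$, which (since $p\in Y^{\prec X}$ and $q\in Y^{\succ X}$) say exactly that $p$ and $q$ are the claimed extrema. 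For the reverse implication I would verify that these extrema make all generators of $\preceq_{(p,\widehat{Y},q)}$ hold in $\preceq_{\overline{X}}$: a generator $(a,b)$ with $a\in Y^{\prec X}$ and $p\preceq b$ factors as $a\preceq_{\overline{X}}p\preceq_{\overline{X}}b$, one with $b\in Y^{\succ X}$ and $a\preceq q$ factors as $a\preceq_{\overline{X}}q\preceq_{\overline{X}}b$, and the generators inherited from $\preceq$ hold because $\preceq_{\overline{X}}$ extends $\preceq$.

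The structural input I expect to use is the convexity packaged in $Y\in X_r$: it yields $Y^{\prec X}\cap Y^{\succ X}=\emptyset$, since an element lying in both would satisfy $y'\preceq x\preceq y$ for some $y,y'\in Y$ with $x\notin Y$, contradicting the defining condition of $X_r$; this disjointness is what makes the ``below'' and ``above'' extrema independent and mutually consistent. I expect the only real obstacle to be the careful bookkeeping of the forward/reverse equivalence, in particular confirming that the elements singled out by $\preceq_{\overline{X}}$ are exactly the admissible indices $p\rightarrow Y$ and $q\leftarrow Y$ rather than arbitrary elements of $Y^{\prec X}$ and $Y^{\succ X}$. Finally, the argument presumes $Y^{\prec X}$ and $Y^{\succ X}$ are nonempty so that the extrema exist; this is automatic in the intended application to the extension $\widetilde{X}$, whose added points $x_0$ and $x_1$ furnish a global minimum and maximum outside $Y$, and I would record this hypothesis explicitly.
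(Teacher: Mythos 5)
Your proof is correct and follows essentially the same route as the paper's: both arguments assign to each $\overline{X}\in\Tot_{\overline{Y}}(X)$ the unique pair $(p,q)$ given by the $\preceq_{\overline{X}}$-maximum of $Y^{\prec X}$ and the $\preceq_{\overline{X}}$-minimum of $Y^{\succ X}$, and deduce covering and disjointness from the uniqueness of that pair. You in fact supply more detail than the paper does (the two-way characterization of membership and the check that these extrema are admissible indices $p\rightarrow Y$, $q\leftarrow Y$); the only point you leave aside --- as the paper essentially does too --- is the nonemptiness of each block, which is not needed for the way the lemma is used in Theorem \ref{thm1}.
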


\begin{proof}
    We check the definition of a partition. Since $Y\in X_r$, $\Tot_{p,\overline{Y},q}(X)$ is nonempty. For all $\overline{X}\in \Tot_{\overline{Y}}(X)$, we have $\overline{X}\in \Tot_{p,\overline{Y},q}(X)$, where
    $$p \text{ is the unique element determined by } \overline{X} \text{ s.t. } p \rightarrow_{X} Y \text{ and } \forall\, l\in Y^{\prec X} \ l\preceq_{\overline{X}} p$$
    $$q \text{ is the unique element determined by } \overline{X} \text{ s.t. } q \leftarrow_{X} Y \text{ and } \forall\, u\in Y^{\succ X} \ q\preceq_{\overline{X}} u$$
    By above, since $p,q$ are unique, if $p\neq p'$ or $q\neq q'$, then $\Tot_{p,\overline{Y},q}(X) \cap \Tot_{p',\overline{Y},q'}(X) = \emptyset$.
\end{proof}

\begin{lem}\label{lem2}
    Let $X$ be a labeled poset. For $Y\in X_r$, $\overline{Y}\in \Tot(Y)$ and $\overline{X}\in \Tot_{p,\overline{Y},q}(X)$, we have
    \begin{enumerate}[(1)]
        \item $P_{\overline{Y}}$ is a function from $\Tot_{p,\overline{Y},q}(X)$ to $\Tot(X_{(p, \widehat{Y}, q)})$.
        \item $P_{\overline{Y}}$ is a surjection, and for $W = \overline{X_{(p, \widehat{Y}, q)}} \in \Tot(X_{(p, \widehat{Y}, q)})$ we have $$P_{\overline{Y}}^{-1}(W) = \{\overline{X}_{\overline{Y},x} \,|\, x \in p^{\succeq W}\cap q^{\prec W} \},$$
        where
        $$\overline{X}_{\overline{Y},x} \coloneqq (X, \preceq_{W} \cup \{(x',y) \,|\, x'\in x^{\preceq W},\ y\in Y\} \cup \{(y,x') \,|\, x'\in x^{\succ W},\ y\in Y\}\cup \preceq_{\overline{Y}}, \delta).$$
    \end{enumerate}
\end{lem}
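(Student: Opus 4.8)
The plan is to treat the two parts together by observing that $P_{\overline Y}$ is nothing but ``delete the block $Y$'', and that its only possible inverses are ``re-insert the block $Y$ into a gap lying between $p$ and $q$''. Throughout I use that, since $\overline X$ is a total order in which $Y$ is a consecutive block (this is exactly the content of $\overline Y\in\overline X_r$ together with $\preceq_{\overline Y}\subset\preceq_{\overline X}$), the poset $P_{\overline Y}(\overline X)=\overline X_{\widehat{\overline Y}}$ is simply the total order \emph{induced} by $\overline X$ on $X\setminus Y$; one checks directly from the definition of $X_{\widehat{\,\cdot\,}}$ that the reconnection it performs on a total order whose block is consecutive reproduces exactly this induced order.

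For part (1) I first record the two containments that drive the whole argument. Because $\overline X\in\Tot_{p,\overline Y,q}(X)$ forces $\preceq_{(p,\widehat Y,q)}\subset\preceq_{\overline X}$, and restriction to $X\setminus Y$ turns $\preceq_{\overline X}$ into the induced total order, it is immediate that $P_{\overline Y}(\overline X)$ extends $\preceq_{(p,\widehat Y,q)}$, i.e. $P_{\overline Y}(\overline X)\in\Tot(X_{(p,\widehat Y,q)})$. Applying the same defining containment to an arbitrary $W\in\Tot(X_{(p,\widehat Y,q)})$ yields
$$Y^{\prec X}\subseteq p^{\preceq W}\qquad\text{and}\qquad Y^{\succ X}\subseteq q^{\succeq W},$$
obtained by specializing the two ``connecting'' families of $\preceq_{(p,\widehat Y,q)}$ to $b=p$ (so $a\preceq_W p$ for every $a\in Y^{\prec X}$) and to $a=q$ (so $q\preceq_W c$ for every $c\in Y^{\succ X}$). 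These are the facts that localize the block between $p$ and $q$.

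For part (2), fix $W\in\Tot(X_{(p,\widehat Y,q)})$ and prove the fiber formula by two inclusions. For the forward inclusion, given $x$ with $p\preceq_W x\prec_W q$, I claim $\overline X_{\overline Y,x}\in\Tot_{p,\overline Y,q}(X)$ and $P_{\overline Y}(\overline X_{\overline Y,x})=W$. By construction it is a total order on $X$ in which $Y$ is consecutive and ordered by $\overline Y$, it contains $\preceq_W\supseteq\preceq_{(p,\widehat Y,q)}$, and deleting the block returns $W$; the only substantive point is that it extends $\preceq_X$. For a relation $a\prec_X y$ with $y\in Y$ one has $a\in Y^{\prec X}\subseteq p^{\preceq W}$, hence $a\preceq_W p\preceq_W x$, so $a\in x^{\preceq W}$ and $a$ is placed below the block; dually $y\prec_X c$ gives $c\in Y^{\succ X}\subseteq q^{\succeq W}$, hence $x\prec_W q\preceq_W c$ and $c$ is placed above the block; relations internal to $X\setminus Y$ or to $Y$ are carried by $\preceq_W$ and $\preceq_{\overline Y}$ respectively. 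For the reverse inclusion, any preimage $\overline X$ is $W$ with the block reinserted immediately after some $x\in X\setminus Y$; since $\overline X$ extends $\preceq_X$, the element $p$ sits below the block and $q$ above it, which forces $p\preceq_W x\prec_W q$, and then $\overline X=\overline X_{\overline Y,x}$.

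The main obstacle is surjectivity, which reduces to showing the index set $p^{\succeq W}\cap q^{\prec W}$ is nonempty for every $W$, equivalently that $p\prec_W q$ for all $W\in\Tot(X_{(p,\widehat Y,q)})$, i.e. that $p\prec_{(p,\widehat Y,q)} q$ holds in the contracted poset. This is the single step where the interplay between the choice $p\in Y^{\rightarrow X}$, $q\in Y^{\leftarrow X}$ and the condition $Y\in X_r$ must genuinely be used: $p$ lies below some $y\in Y$ and $q$ above some $y'\in Y$, and one must extract from this the relation $p\prec q$ after contraction (which the connecting family with $a=p,\ b=q$ then records). Once nonemptiness is secured, the forward inclusion already supplies an explicit preimage, e.g. $\overline X_{\overline Y,p}$, and injectivity of $x\mapsto\overline X_{\overline Y,x}$ on the index set is clear because distinct $x$ reinsert the block into distinct gaps of $W$.
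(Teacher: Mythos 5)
Your overall strategy coincides with the paper's: part (1) by restricting the containment $\preceq_{(p,\widehat Y,q)}\subset\preceq_{\overline X}$ to $X\setminus Y$, and part (2) by re-inserting the totally ordered block $\overline Y$ into the gaps of $W$ lying between $p$ and $q$. Your verification that each $\overline X_{\overline Y,x}$ with $p\preceq_W x\prec_W q$ extends $\preceq_X$ (via $Y^{\prec X}\subseteq p^{\preceq W}$ and $Y^{\succ X}\subseteq q^{\succeq W}$), and that every element of the fiber arises in this way, is correct and in fact more complete than the paper's one-line assertion of the fiber formula.

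However, the step you yourself isolate as the main obstacle --- that $p^{\succeq W}\cap q^{\prec W}\neq\emptyset$, equivalently $p\prec_W q$, for \emph{every} $W\in\Tot(X_{(p,\widehat Y,q)})$ --- is not actually proved, and the justification you sketch does not work. You say the relation $p\prec q$ is then ``recorded'' by the connecting family with $a=p$, $b=q$; but that family contributes $(p,q)$ to $\preceq_{(p,\widehat Y,q)}$ only under its side condition $p\preceq b$, i.e.\ $p\preceq_X q$ in the \emph{original} order on $X$, and this does not follow from $p\rightarrow Y$, $q\leftarrow Y$, $Y\in X_r$. Concretely, take $X=\{p,y_1,y_2,q\}$ whose only nontrivial relations are $p\prec y_1$ and $y_2\prec q$, and $Y=\{y_1,y_2\}\in X_2$: then $Y^{\rightarrow X}=\{p\}$, $Y^{\leftarrow X}=\{q\}$, $p\not\preceq_X q$, and $\preceq_{(p,\widehat Y,q)}$ as literally defined is the trivial order on $\{p,q\}$; the total order with $q\prec p$ therefore lies in $\Tot(X_{(p,\widehat Y,q)})$ and has empty fiber, so surjectivity fails for the definition taken at face value. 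Closing the gap requires either enlarging $\preceq_{(p,\widehat Y,q)}$ by the relations $Y^{\prec X}\times Y^{\succ X}$ (presumably the intended meaning of the contraction, and automatic when $r=1$ since then $p\prec y\prec q$ gives $p\preceq_X q$ by transitivity), or an argument that $p\preceq_X q$ holds in the cases actually used. To be fair, the paper's own proof is silent on exactly this point --- it simply asserts that $\overline X_{\overline Y,p}$ is a preimage, which presupposes the same fact --- so you have correctly located where the real content of the lemma lies, but you have not supplied it.
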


\begin{proof}
    \begin{enumerate}[(1)]
        \item By definition of $\Tot_{p,\overline{Y},q}(X)$,
        $$\forall\, \overline{X}\in \Tot_{p,\overline{Y},q}(X),\ \preceq_{X_{(p, \widehat{Y}, q)}}\subset \preceq_{\overline{X}}\ \Rightarrow \ \preceq_{X_{(p, \widehat{Y}, q)}}\subset \preceq_{\overline{X}_{\widehat{Y}}}$$
        i.e. $\overline{X}_{\widehat{Y}}\in \Tot(X_{(p, \widehat{Y}, q)})$, so $P_{\overline{Y}}$ is a function from $\Tot_{p,\overline{Y},q}(X)$ to $\Tot(X_{(p, \widehat{Y}, q)})$.
        \item Let $W = \overline{X_{(p, \widehat{Y}, q)}} \in \Tot(X_{(p, \widehat{Y}, q)})$. Then there exists
        $$\overline{X}_{\overline{Y},p} \coloneqq ((X, \preceq_{W} \cup \{(x',y) \,|\, x'\in p^{\preceq W},\ y\in Y\} \cup \{(y,x') \,|\, x'\in p^{\succ W},\ y\in Y\}\cup \preceq_{\overline{Y}}), \delta_{X})$$
        s.t. $P_{\overline{Y}}(\overline{X}_{\overline{Y},p}) = W$. Moreover, we have
        $$P_{\overline{Y}}^{-1}(W) = \{\overline{X}_{\overline{Y},x} \,|\, x \in p^{\succeq W}\cap q^{\prec W} \}.$$
    \end{enumerate}
\end{proof}
We rewrite the definition of $D_r$ and $D_r'$ by our notation.
\begin{defn}
    Let $X$ be a labeled poset and $\widetilde{X}$ be its extension with respect to some path. For $p,q\in \widetilde{X}$ and $Y\subset X$, we define
    $$I^\afk_{(p;q)}(Y) \coloneqq \pi(I^\mfk_{\gamma}(Y)) \text{ with an arbitrary path } \gamma \text{ from } \widetilde{\delta}_{\widetilde{X}}(p) \text{ to } \widetilde{\delta}_{\widetilde{X}}(q)$$
    and
    $$I^\lfk_{(p;q)}(Y) \coloneqq \pi'(I^\afk_{(p;q)}(Y)).$$
\end{defn}
\begin{defn}\label{defD}
    Let $X$ be a totally ordered set. We define
    $$D_r' : \Hcal \to \Acal \otimes \Hcal$$
    and
    $$D_r = (\pi'\otimes id)\circ D_r': \Hcal \to \Lfk \otimes \Hcal.$$
    as
    $$D_r'(I^\mfk_\gamma(X)) \coloneqq \sum_{Y\in X_r} I^\afk_{(\rightarrow_{\widetilde{X}} Y;Y \rightarrow_{\widetilde{X}})}(Y)\otimes I^\mfk_\gamma(X_{\widehat{Y}})$$
    and
    $$D_r(I^\mfk_\gamma(X)) \coloneqq \sum_{Y\in X_r} I^{\lfk}_{(\rightarrow_{\widetilde{X}} Y;Y \rightarrow_{\widetilde{X}})}(Y)\otimes I^\mfk_\gamma(X_{\widehat{Y}})$$
    respectively.
\end{defn}

\begin{thm}\label{thm1}
    Let $X$ be a labeled poset and $\gamma$ be a path. Then,
    $$D_r(I^\mfk_\gamma(X)) = \sum_{\substack{Y\in X_r \\ Y \text{ is irr. }}} \sum_{\substack{p\rightarrow_{\widetilde{X}} Y \\ q \leftarrow_{\widetilde{X}} Y}} I^\lfk_{(p;q)}(Y)\otimes I^\mfk_\gamma(X_{(p, \widehat{Y}, q)}).$$
\end{thm}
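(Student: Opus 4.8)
The plan is to reduce everything to the totally ordered case, reorganize the resulting double sum by means of the two combinatorial lemmas, and then perform a telescoping collapse that is really the path composition formula read modulo products. Concretely, I would start from the defining expansion $I^\mfk_\gamma(X) = \sum_{\overline{X}\in\Tot(X)} I^\mfk_\gamma(\overline{X})$ and use $\QQ$-linearity of $D_r$ on $\Hcal$ together with Definition \ref{defD} applied to each totally ordered $\overline{X}$ to get
\[
D_r(I^\mfk_\gamma(X)) = \sum_{\overline{X}\in\Tot(X)}\ \sum_{\overline{Y}\in\overline{X}_r} I^\lfk_{(\rightarrow_{\widetilde{\overline{X}}}\overline{Y};\,\overline{Y}\rightarrow_{\widetilde{\overline{X}}})}(\overline{Y})\otimes I^\mfk_\gamma(\overline{X}_{\widehat{\overline{Y}}}),
\]
where, since $\overline{X}$ is a total order, $\overline{Y}$ runs over the length-$r$ intervals and the two endpoints are the immediate predecessor and successor of $\overline{Y}$. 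Each such interval has an underlying set $Y\in X_r$ and carries an induced total order $\overline{Y}\in\Tot(Y)$, while $\overline{X}$ then ranges exactly over $\Tot_{\overline{Y}}(X)$; this gives the reindexing $\sum_{Y\in X_r}\sum_{\overline{Y}\in\Tot(Y)}\sum_{\overline{X}\in\Tot_{\overline{Y}}(X)}$.

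Next I would invoke the lemmas. By Lemma \ref{lem1} the set $\Tot_{\overline{Y}}(X)$ is partitioned into the pieces $\Tot_{p,\overline{Y},q}(X)$ indexed by $p\rightarrow_{\widetilde{X}} Y$, $q\leftarrow_{\widetilde{X}} Y$, so I may insert the sum over $(p,q)$ and restrict $\overline{X}$ accordingly. By Lemma \ref{lem2} the map $P_{\overline{Y}}$ fibers $\Tot_{p,\overline{Y},q}(X)$ over $\Tot(X_{(p,\widehat{Y},q)})$: for a fixed linear extension $W$ of $X_{(p,\widehat{Y},q)}$ the fiber is $\{\overline{X}_{\overline{Y},x}\mid x\in p^{\succeq W}\cap q^{\prec W}\}$, and in $\overline{X}_{\overline{Y},x}$ the interval $\overline{Y}$ is placed immediately after $x$, so its predecessor is $x$, its successor is $\mathrm{succ}_W(x)$, and $\overline{X}_{\widehat{\overline{Y}}}=W$. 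Because $p^{\succeq W}\cap q^{\prec W}$ is the half-open interval $[p,q)$ of the total order $W$, its elements are consecutive; listing them as $p=c_0\prec_W\cdots\prec_W c_{m-1}$ with $c_m=q$, one has $\mathrm{succ}_W(c_i)=c_{i+1}$, so the total contribution of the fiber over $W$ is $\left(\sum_{i=0}^{m-1} I^\lfk_{(c_i;c_{i+1})}(\overline{Y})\right)\otimes I^\mfk_\gamma(W)$.

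The heart of the proof, and the step I expect to be the main obstacle, is to collapse this inner sum to $I^\lfk_{(p;q)}(\overline{Y})$. This is precisely the path composition formula of Proposition \ref{propMI}(4) read in $\Lfk=\Acal/\Acal_{>0}^2$: composing a path from $\widetilde{\delta}(c_0)=\widetilde{\delta}(p)$ to $\widetilde{\delta}(c_m)=\widetilde{\delta}(q)$ through the intermediate values $\widetilde{\delta}(c_1),\ldots,\widetilde{\delta}(c_{m-1})$ writes $I^\afk_{(p;q)}(\overline{Y})$ as a sum over deconcatenations of the word of $\overline{Y}$; every term splitting the word into two or more nonempty pieces is a product of elements of $\Acal_{>0}$ and hence dies under $\pi'$, leaving the telescoping identity $I^\lfk_{(p;q)}(\overline{Y})=\sum_{i=0}^{m-1} I^\lfk_{(c_i;c_{i+1})}(\overline{Y})$. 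Here I must use that the surjectivity in Lemma \ref{lem2} forces $p\prec_W q$ (so the chain $c_0,\ldots,c_m$ is nonempty and well defined) and treat the boundary cases $p=x_0$, $q=x_1$ by working consistently in the extension $\widetilde{X}$.

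After this collapse the left factor no longer depends on $W$, so I resum the right factors via $\sum_{W\in\Tot(X_{(p,\widehat{Y},q)})} I^\mfk_\gamma(W)=I^\mfk_\gamma(X_{(p,\widehat{Y},q)})$ and the left factors via $\sum_{\overline{Y}\in\Tot(Y)} I^\lfk_{(p;q)}(\overline{Y})=I^\lfk_{(p;q)}(Y)$, obtaining $\sum_{Y\in X_r}\sum_{p\rightarrow_{\widetilde{X}} Y,\,q\leftarrow_{\widetilde{X}} Y} I^\lfk_{(p;q)}(Y)\otimes I^\mfk_\gamma(X_{(p,\widehat{Y},q)})$. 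Finally I discard the reducible $Y$: if $Y=Y_1\sqcup Y_2$ with $Y_1,Y_2$ nonempty, the motivic analogue of Proposition \ref{propYI}(1) (which follows from the shuffle relation Proposition \ref{propMI}(3) applied with the common endpoints $\widetilde{\delta}(p),\widetilde{\delta}(q)$) gives $I^\afk_{(p;q)}(Y)=I^\afk_{(p;q)}(Y_1)\,I^\afk_{(p;q)}(Y_2)\in\Acal_{>0}^2$, whence $I^\lfk_{(p;q)}(Y)=\pi'(I^\afk_{(p;q)}(Y))=0$. This leaves exactly the asserted sum over irreducible $Y$, completing the proof.
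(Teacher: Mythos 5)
Your proposal follows essentially the same route as the paper's own proof: expand over $\Tot(X)$, reindex via Lemma \ref{lem1} and the fibration of Lemma \ref{lem2}, collapse the fiber sum by the telescoping/path-composition identity in $\Lfk$, resum to $I^\mfk_\gamma(X_{(p,\widehat{Y},q)})$, and kill reducible $Y$ because their $I^\afk$ factors as a product lying in $\Acal_{>0}^2$. Your write-up is in fact somewhat more explicit than the paper's at the telescoping step (where the paper cites the path composition formula tersely), but the argument is the same.
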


\begin{proof}
    First, by Definition \ref{def} and \ref{defD}, we have
    $$D_r(I^\mfk_\gamma(X)) = \sum_{\overline{X}\in \Tot(X)} D_r(I^\mfk_\gamma(\overline{X})) = \sum_{\overline{X}\in \Tot(X)} \sum_{Z\in \overline{X}_r} I^\lfk_{(\rightarrow_{\widetilde{\overline{X}}} Z;Z \rightarrow_{\widetilde{\overline{X}}})}(Z)\otimes I^\mfk_\gamma(\overline{X}_{\widehat{Z}}).$$
    Since the summation is finite, we can sum over $\overline{X}$ first to get
    $$D_r(I^\mfk_\gamma(X)) = \sum_{Y\in X_r} \sum_{Z\in \Tot(Y)} \sum_{\overline{X}\in \Tot_{Z}(X)} I^\lfk_{(\rightarrow Z;Z \rightarrow)}(Z)\otimes I^\mfk_\gamma(\overline{X}_{\widehat{Z}}).$$
    By Lemma \ref{lem1}, we have
    $$D_r(I^\mfk_\gamma(X)) = \sum_{Y\in X_r} \sum_{Z\in \Tot(Y)} \sum_{\substack{p\rightarrow_{\widetilde{X}} Y \\ q\leftarrow_{\widetilde{X}} Y}} \sum_{\overline{X}\in \Tot_{p,Z,q}(X)} I^\lfk_{(\rightarrow Z;Z \rightarrow)}(Z) \otimes I^\mfk_\gamma(\overline{X}_{\widehat{Z}}).$$
    By Lemma \ref{lem2}, we have
    $$D_r(I^\mfk_\gamma(X)) = \sum_{Y\in X_r} \sum_{Z\in \Tot(Y)} \sum_{\substack{p\rightarrow Y \\ q\leftarrow Y}} \sum_{\overline{X}_{\widehat{Z}}\in \Tot(X_{(p, \widehat{Z}, q)})} \sum_{\overline{X}\in P_{Z}^{-1}(\overline{X}_{\widehat{Z}})} I^\lfk_{(\rightarrow Z;Z \rightarrow)}(Z) \otimes I^\mfk_\gamma(\overline{X}_{\widehat{Z}}).$$
    For fixed $p,q$, we use (5) of Proposition \ref{propMI} and the definition of $D_r$ to get
    \begin{align*}
        \sum_{\overline{X}\in P_{Z}^{-1}(\overline{X}_{\widehat{Z}})} I^\lfk_{(\rightarrow Z;Z \rightarrow)}(Z) \otimes I^\mfk_\gamma(\overline{X}_{\widehat{Z}}) & = \left( \sum_{\overline{X}\in P_{Z}^{-1}(\overline{X}_{\widehat{Z}})} I^\lfk_{(\rightarrow Z;Z \rightarrow)}(Z) \right) \otimes I^\mfk_\gamma(\overline{X}_{\widehat{Z}})\\
        & = \left( I^\lfk_{(p;p \rightarrow_{\widetilde{\overline{X}_{\widehat{Z}}}})}(Z) + \cdots + I^\lfk_{(\rightarrow_{\widetilde{\overline{X}_{\widehat{Z}}}} q; q)}(Z) \right) \otimes I^\mfk_\gamma(\overline{X}_{\widehat{Z}})\\
        & = I^\lfk_{(p;q)}(Z)\otimes I^\mfk_\gamma(\overline{X}_{\widehat{Z}}).
    \end{align*}
    Hence, we have
    \begin{align*}
        D_r(I^\mfk_\gamma(X)) & = \sum_{Y\in X_r} \sum_{Z\in \Tot(Y)} \sum_{\substack{p\rightarrow Y \\ q\leftarrow Y}} \sum_{\overline{X}_{\widehat{Z}}\in \Tot(X_{(p, \widehat{Z}, q)})} I^\lfk_{(p;q)}(Z) \otimes I^\mfk_\gamma(\overline{X}_{\widehat{Z}})\\
        & = \sum_{Y\in X_r} \sum_{Z\in \Tot(Y)} \sum_{\substack{p\rightarrow Y \\ q\leftarrow Y}} I^\lfk_{(p;q)}(Z) \otimes \left( \sum_{\overline{X}_{\widehat{Z}}\in \Tot(X_{(p, \widehat{Z}, q)})} I^\mfk_\gamma(\overline{X}_{\widehat{Z}}) \right)\\
        & = \sum_{Y\in X_r} \sum_{Z\in \Tot(Y)} \sum_{\substack{p\rightarrow Y \\ q\leftarrow Y}} I^\lfk_{(p;q)}(Z) \otimes I^\mfk_\gamma(X_{(p, \widehat{Z}, q)}).
    \end{align*}
    Since the underlying set of $Y$ and $Z$ are the same and $p,q$ depend only on $Y$, we have
    $$D_r(I^\mfk_\gamma(X)) = \sum_{Y\in X_r} \sum_{\substack{p\rightarrow Y \\ q\leftarrow Y}} \sum_{Z\in \Tot(Y)} I^\lfk_{(p;q)}(Z) \otimes I^\mfk_\gamma(X_{(p, \widehat{Y}, q)}).
    $$
    Notice that
    $$\sum_{Z\in \Tot(Y)} I^\lfk_{(p;q)}(Z) = \prod_{\substack{Y_i\subset Y \\ Y_i \text{ is irr.}}} I^\lfk_{(p;q)}(Y_i).$$
    If $Y$ is reducible, then
    $$\pi'\left(\prod_{\substack{Y_i\subset Y \\ Y_i \text{ is irr.}}} I^\lfk_{(p;q)}(Y_i)\right) = 0,$$
    hence
    $$D_r(I^\mfk_\gamma(X)) = \sum_{\substack{Y\in X_r \\ Y \text{ is irr. }}} \sum_{\substack{p\rightarrow Y \\ q \leftarrow Y}} I^\lfk_{(p;q)}(Y)\otimes I^\mfk_\gamma(X_{(p, \widehat{Y}, q)}).$$
\end{proof}

\begin{cor}
    Let $X$ be a labeled poset and $\gamma$ be a path. Then,
    $$D_1(I^\mfk_\gamma(X)) \coloneqq \sum_{x\in X} \sum_{\substack{p\rightarrow_{\widetilde{X}} x \\ q\leftarrow_{\widetilde{X}} x}} I^\afk_{(p;q)}(x)\otimes I^\mfk_\gamma(X_{\widehat{x}}).$$
\end{cor}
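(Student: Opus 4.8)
The plan is to read off the corollary as the specialization of Theorem \ref{thm1} to $r = 1$, so that the only work is to simplify the three ingredients appearing on the right-hand side. First I would identify the index set $X_1$. Since $Y \in X_1$ forces $|Y| = 1$, we have $Y = \{x\}$ for some $x \in X$, and the defining condition of $X_r$ is vacuous for a one-element set: taking $y = y' = x$, antisymmetry of $\preceq$ already rules out any $x' \neq x$ with $x \preceq x' \preceq x$. Hence $X_1 = \{\,\{x\} \mid x \in X\,\}$, and the outer sum $\sum_{Y \in X_1}$ collapses to $\sum_{x \in X}$.

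Next I would dispose of the irreducibility constraint. A singleton $\{x\}$ has no nonempty proper subset, so it cannot be written as a nontrivial direct sum $Y \sqcup X_{\widehat{Y}}$; thus every $\{x\} \in X_1$ is irreducible and the restriction ``$Y$ irr.'' in Theorem \ref{thm1} is automatically satisfied. With these two observations Theorem \ref{thm1} at $r = 1$ already reads
\begin{align*}
    D_1(I^\mfk_\gamma(X)) = \sum_{x \in X} \sum_{\substack{p \rightarrow_{\widetilde{X}} x \\ q \leftarrow_{\widetilde{X}} x}} I^\lfk_{(p;q)}(x) \otimes I^\mfk_\gamma(X_{(p, \widehat{x}, q)}),
\end{align*}
where the modified poset $X_{(p,\widehat{x},q)}$ is what is written $X_{\widehat{x}}$ in the statement.

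The one genuinely structural step is to replace $I^\lfk_{(p;q)}(x)$ by $I^\afk_{(p;q)}(x)$. Here $I^\afk_{(p;q)}(x) = \pi(I^\mfk_\gamma(\{x\}))$ is the image of a weight-one motivic iterated integral (the single middle letter $\delta(x)$ between endpoints $\widetilde{\delta}(p)$ and $\widetilde{\delta}(q)$), hence homogeneous of degree $1$ in the grading $\Acal = \bigoplus_{k \geq 0} \Acal_k$. Since $\Acal_{>0}^2$ is supported in degrees $\geq 2$, the projection $\pi' : \Acal \to \Lfk = \Acal / \Acal_{>0}^2$ restricts to an isomorphism $\Acal_1 \xrightarrow{\sim} \Lfk_1$; under this isomorphism $I^\afk_{(p;q)}(x)$ corresponds to $I^\lfk_{(p;q)}(x) = \pi'(I^\afk_{(p;q)}(x))$, so the two forms of the left tensor factor carry exactly the same information and the display above is precisely the asserted formula.

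The main (and essentially only) obstacle is this last identification: one must use that $\Acal_{>0}^2$ contributes nothing in weight $1$, so that no information is lost in writing $I^\afk$ in place of $I^\lfk$. Everything else is bookkeeping — that $X_1$ consists of singletons and that singletons are automatically irreducible — both immediate from the definitions, so I expect the write-up to be short once Theorem \ref{thm1} is invoked.
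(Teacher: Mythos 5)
Your proposal is correct and matches the paper's (implicit) argument: the corollary is stated without proof as the immediate specialization of Theorem \ref{thm1} to $r=1$, and your three observations --- that $X_1$ consists of singletons, that singletons are automatically irreducible, and that $\pi'$ loses nothing in weight one so $I^\lfk_{(p;q)}(x)$ may be written as $I^\afk_{(p;q)}(x)$ --- are exactly the bookkeeping needed to pass from the theorem to the stated formula.
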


\begin{lem}\label{lem3}
    If $Y\in X_r$ for some $r$, then for $Y_1,Y_2\in \Tot(Y)$,
    $$T_{Y_1,Y_2} : \Tot_{Y_1}(X) \to \Tot_{Y_2}(X)$$
    is a bijection.
\end{lem}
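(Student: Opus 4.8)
The plan is to exhibit an explicit two-sided inverse, namely $T_{Y_2,Y_1}$, after first checking that $T_{Y_1,Y_2}$ genuinely maps $\Tot_{Y_1}(X)$ into $\Tot_{Y_2}(X)$. (The standing hypothesis $Y\in X_r$ is what makes the decorated sets $\Tot_{Y_1}(X)$ and $\Tot_{Y_2}(X)$ meaningful in the first place.) The geometric picture driving everything is that membership in $\Tot_{Y_1}(X)$ forces the elements of $Y$ to sit as a single consecutive block inside the total order $\overline{X}$: since $\overline{X}\in \Tot_{Y_1}(X)$ requires $Y_1\in \overline{X}_r$, no element of $X\setminus Y$ can lie $\preceq_{\overline{X}}$-between two elements of $Y$, and because $\preceq_{\overline{X}}$ is total this is exactly the assertion that $Y$ is an interval. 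Consequently every external element falls into one of two classes, those $\preceq_{\overline{X}}$-below the whole block and those $\preceq_{\overline{X}}$-above it, and $\overline{X}$ is the concatenation of the lower external part, the $Y$-block carrying its $Y_1$-order, and the upper external part.

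First I would verify well-definedness. Passing from $\overline{X}$ to $T_{Y_1,Y_2}(\overline{X})$ deletes the internal pairs $\preceq_{Y_1}$ and inserts $\preceq_{Y_2}$, leaving all external-external and mixed relations untouched; in the concatenation picture this merely re-orders the middle block from its $Y_1$-order to its $Y_2$-order while fixing the two outer parts. A concatenation of total orders on the disjoint pieces is again a total order, so $T_{Y_1,Y_2}(\overline{X})\in \Tot(X)$ once I confirm it still extends $\preceq$: the mixed and external relations of $\preceq$ survive because they are never internal to $Y$, while the internal relations $\preceq_Y$ are absorbed since $\preceq_Y\subset \preceq_{Y_2}$ by $Y_2\in \Tot(Y)$. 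Reordering inside the block keeps $Y$ consecutive, so $Y_2\in T_{Y_1,Y_2}(\overline{X})_r$ and the order induced on $Y$ is exactly $\preceq_{Y_2}$; together with $\preceq_{Y_2}\subset \preceq_{T_{Y_1,Y_2}(\overline{X})}$ this places the image in $\Tot_{Y_2}(X)$.

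For the bijection I would show that $T_{Y_2,Y_1}$ is inverse to $T_{Y_1,Y_2}$ by a direct computation on the defining formula. The crucial bookkeeping fact is that for $\overline{X}\in \Tot_{Y_1}(X)$ the restriction of $\preceq_{\overline{X}}$ to $Y$ equals $\preceq_{Y_1}$, so $\preceq_{\overline{X}}\setminus \preceq_{Y_1}$ contains no pair internal to $Y$; hence in $(\preceq_{\overline{X}}\setminus \preceq_{Y_1})\cup \preceq_{Y_2}$ the internal-$Y$ pairs are precisely $\preceq_{Y_2}$. Removing $\preceq_{Y_2}$ then returns $\preceq_{\overline{X}}\setminus \preceq_{Y_1}$, and re-adjoining $\preceq_{Y_1}$ recovers $\preceq_{\overline{X}}$ because $\preceq_{Y_1}\subset \preceq_{\overline{X}}$; thus $T_{Y_2,Y_1}\circ T_{Y_1,Y_2}=\id$. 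The symmetric computation, valid because $T_{Y_2,Y_1}$ is well-defined by the same argument with the roles of $Y_1$ and $Y_2$ swapped, gives $T_{Y_1,Y_2}\circ T_{Y_2,Y_1}=\id$, so $T_{Y_1,Y_2}$ is a bijection.

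I expect the only genuine content to lie in the well-definedness step, and within it the check that $T_{Y_1,Y_2}(\overline{X})$ is transitive and total; the consecutive-block observation reduces this to concatenating total orders on disjoint sets, which eliminates essentially all the difficulty. The inverse computation is then a routine manipulation of the set-theoretic formula once the restriction identity $\preceq_{\overline{X}}|_{Y}=\preceq_{Y_1}$ is in hand.
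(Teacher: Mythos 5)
Your proof is correct and follows essentially the same route as the paper's: check that $T_{Y_1,Y_2}$ lands in $\Tot_{Y_2}(X)$ and then exhibit $T_{Y_2,Y_1}$ as a two-sided inverse. In fact your write-up is more careful than the paper's, which asserts without comment that $(\preceq_{\overline{X}}\setminus \preceq_{Y_1})\cup \preceq_{Y_2}$ is again a total order extending $\preceq$; your consecutive-block observation (that $Y_1\in\overline{X}_r$ forces $Y$ to be an interval of the total order, so the operation is just a reordering of that block) is exactly the missing justification, as is your explicit use of $\preceq_{\overline{X}}|_Y=\preceq_{Y_1}$ in the inverse computation.
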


\begin{proof}
     Let $\overline{X}\in \Tot_{Y_1}(X)$. Then, we have $\preceq_{Y_2} \subset ((\preceq_{\overline{X}}\setminus \preceq_{Y_1}) \cup \preceq_{Y_2})$. Since as a set $Y_1 = Y_2$ and
    $$\{(y, x)\in \preceq_{\overline{X}}\,|\,x\in (X\setminus Y_2),\ y\in Y_2\} \cup \{(x, y)\in \preceq_{\overline{X}}\,|\,x\in (X\setminus Y_2),\ y\in Y_2\} \subset ((\preceq_{\overline{X}}\setminus \preceq_{Y_1}) \cup \preceq_{Y_2}),$$
    $Y_2\in X_r$ under the order $((\preceq_{\overline{X}}\setminus \preceq_{Y_1}) \cup \preceq_{Y_2})$. Hence, $T_{Y_1,Y_2}(\overline{X}) \in \Tot_{Y_2}(X)$.\\
    Now, we swap $Y_1$ and $Y_2$. Then $T_{Y_2,Y_1} : \Tot(Y_2) \to \Tot(Y_1)$ is the inverse of $T_{Y_1,Y_2}$, i.e.,
    $$T_{Y_2,Y_1} \circ T_{Y_1,Y_2} = id_{\Tot_{Y_1}(X)} \text{ and } T_{Y_1,Y_2} \circ T_{Y_2,Y_1} = id_{\Tot_{Y_2}(X)}.$$
    Hence, $T_{Y_1,Y_2}$ is a bijection from $\Tot_{Y_1}(X)$ to $\Tot_{Y_2}(X)$.
\end{proof}

\begin{prop}\label{prop1-2}
    Let $X$ be a labeled poset and $\gamma$ be a path. Fix a totally ordered set $Z \in \Tot(Y)$. Then,
    $$D_r'(I^\mfk_\gamma(X)) = \sum_{Y\in X_r} \sum_{\overline{X}\in \Tot_{Z}(X)} \prod_{i=1}^{n_{Y}} I^\afk_{(\rightarrow_{\widetilde{\overline{X}}} Z;Z \rightarrow_{\widetilde{\overline{X}}})}(Y_i)\otimes I^\mfk_\gamma(\overline{X}_{\widehat{Z}}).$$
\end{prop}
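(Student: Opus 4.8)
The plan is to reorganize the sum exactly as in the proof of Theorem \ref{thm1}, but to stop before projecting to $\Lfk$ so that the full $\Acal$-valued product of irreducible factors survives. First I would expand $I^\mfk_\gamma(X)=\sum_{\overline{X}\in\Tot(X)}I^\mfk_\gamma(\overline{X})$ by Definition \ref{def} and apply the totally ordered case of Definition \ref{defD} to each term, obtaining
$$D_r'(I^\mfk_\gamma(X)) = \sum_{\overline{X}\in \Tot(X)} \sum_{Z\in \overline{X}_r} I^\afk_{(\rightarrow_{\widetilde{\overline{X}}} Z;Z \rightarrow_{\widetilde{\overline{X}}})}(Z) \otimes I^\mfk_\gamma(\overline{X}_{\widehat{Z}}).$$
Each summand is indexed by a pair $(\overline{X},Z)$ in which $\overline{X}\in\Tot(X)$ and $Z$ is an $r$-element interval of $\overline{X}$. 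I would reindex by the underlying set $Y$ of $Z$, which necessarily lies in $X_r$ since $\preceq\subset\preceq_{\overline{X}}$ forbids an outside element from sitting between two elements of $Y$, together with the induced total order. This turns the double sum into the triple sum $\sum_{Y\in X_r}\sum_{W\in\Tot(Y)}\sum_{\overline{X}\in\Tot_W(X)}$.

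Next I would fix, for each $Y\in X_r$, a reference total order $Z\in\Tot(Y)$ and transport each inner sum by Lemma \ref{lem3}: the map $T_{W,Z}:\Tot_W(X)\to\Tot_Z(X)$ is a bijection, so I can reindex $\overline{X}\in\Tot_W(X)$ by $\overline{X}'=T_{W,Z}(\overline{X})\in\Tot_Z(X)$. The crucial observation is that $T_{W,Z}$ alters only the order relations internal to $Y$ and preserves the relations within $X\setminus Y$ and between $Y$ and $X\setminus Y$; consequently the quotient factor is unaffected, $\overline{X}_{\widehat{W}}=\overline{X}'_{\widehat{Z}}$, and the elements immediately below and above the interval $Y$ are the same, so the endpoints satisfy $\rightarrow_{\widetilde{\overline{X}}}W=\rightarrow_{\widetilde{\overline{X}'}}Z$ and $W\rightarrow_{\widetilde{\overline{X}}}=Z\rightarrow_{\widetilde{\overline{X}'}}$. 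After this reindexing I would exchange the two inner summations, placing $\overline{X}'\in\Tot_Z(X)$ on the outside, which isolates the factor $\sum_{W\in\Tot(Y)}I^\afk_{(p;q)}(W)$ with fixed endpoints $p=\rightarrow_{\widetilde{\overline{X}'}}Z$ and $q=Z\rightarrow_{\widetilde{\overline{X}'}}$.

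Finally I would evaluate that inner factor using the multiplicativity already exploited in Theorem \ref{thm1}: writing $Y=\bigsqcup_{i=1}^{n_Y}Y_i$ as the disjoint union of its irreducible components and combining Definition \ref{def} with Proposition \ref{propYI}(1) and the shuffle relation Proposition \ref{propMI}(3), which descends to $\Acal$ since $\pi$ is a ring homomorphism, one gets
$$\sum_{W\in \Tot(Y)} I^\afk_{(p;q)}(W) = I^\afk_{(p;q)}(Y) = \prod_{i=1}^{n_Y} I^\afk_{(p;q)}(Y_i).$$
Substituting this and renaming $\overline{X}'$ back to $\overline{X}$ yields precisely the asserted formula. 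I expect the main obstacle to be the verification in the second paragraph that $T_{W,Z}$ fixes the projected poset $\overline{X}_{\widehat{W}}$ and both distinguished endpoints simultaneously; unlike in Theorem \ref{thm1}, here no projection to $\Lfk$ annihilates the reducible contributions, so I must carry the full $\Acal$-valued product throughout and confirm that the shuffle identity holds before, rather than merely after, applying $\pi'$.
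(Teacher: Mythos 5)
Your proposal is correct and follows essentially the same route as the paper: expand $I^\mfk_\gamma(X)$ over $\Tot(X)$, reindex the resulting pairs by $Y\in X_r$ with an induced total order, use Lemma \ref{lem3} to fix a reference order $Z$ and swap the summations, and then collapse the inner sum $\sum_{\overline{Y}\in\Tot(Y)}I^\afk_{(p;q)}(\overline{Y})$ into the product over irreducible components of $Y$. The details you flag as needing verification (that $T_{W,Z}$ preserves the quotient $\overline{X}_{\widehat{W}}$ and both endpoints, and that multiplicativity holds in $\Acal$ before any projection to $\Lfk$) are exactly the points the paper leaves implicit, and your handling of them is sound.
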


\begin{proof}
    From the proof of Theorem \ref{thm1} and by Definition \ref{def} and \ref{defD}, we have
    $$D_r'(I^\mfk_\gamma(X)) = \sum_{Y\in X_r} \sum_{Z\in \Tot(Y)} \sum_{\overline{X}\in \Tot_{Z}(X)} I^\afk_{(\rightarrow_{\widetilde{\overline{X}}} Z;Z \rightarrow_{\widetilde{\overline{X}}})}(Z)\otimes I^\mfk_\gamma(\overline{X}_{\widehat{Z}}).$$
    By Lemma \ref{lem3}, we can fix a totally ordered set $Z\in \Tot(Y)$ first and change the order of the summation to get
    $$D_r'(I^\mfk_\gamma(X)) = \sum_{Y\in X_r} \sum_{\overline{X}\in \Tot_{Z}(X)}  \sum_{\overline{Y} \in \Tot(Y)} I^\afk_{(\rightarrow Z;Z \rightarrow)}(\overline{Y})\otimes I^\mfk_\gamma(\overline{X}_{\widehat{Z}}).$$
    Let $\{Y_i\}_i$ be the set of irreducible components of $Y$. Then,
    $$D_r'(I^\mfk_\gamma(X)) = \sum_{Y\in X_r} \sum_{\overline{X}\in \Tot_{Z}(X)} \prod_{\substack{Y_i\subset Y \\ Y_i \text{ is irr.}}} I^\afk_{(\rightarrow Z;Z \rightarrow)}(Y_i)\otimes I^\mfk_\gamma(\overline{X}_{\widehat{Z}}).$$
\end{proof}

\subsection{A formula for $\Delta$}
\begin{defn}
    Let $X$ be a labeled poset. For $x, x' \in X$, we say $x'$ is adjacent to $x$ if
    $$x \rightarrow x' \text{ or } x' \rightarrow x,$$
    and denote it as $x\leftrightarrow x'$.
    For $y \in Y \subset X$, we define the connected component of $y$ in $Y$ as
    $$C_{X,Y}(y) \coloneqq \{y'\in Y\,|\, \exists \, y_1, \ldots ,y_n \in Y \text{ s.t. } y = y_1 \leftrightarrow \cdots \leftrightarrow y_n = y'\}$$
    and the component set of $Y$ as
    $$C_X(Y) \coloneqq \{C_{X,Y}(y)\,|\,y\in Y\}.$$
    Notice that $C_X(Y)$ forms a partition of $Y$.
\end{defn}

\begin{lem}\label{lem4}
    Let $X$ be a labeled poset. For $Y\subset X$,
    \begin{enumerate}[(1)]
        \item $P_{Y}$ is a function from $\Tot(X)$ to $\Tot(X_{\widehat{Y}})$.
        \item $P_{Y}$ is a surjection.
    \end{enumerate}
\end{lem}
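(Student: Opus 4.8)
The plan is to handle the two assertions separately; part (1) is immediate, and part (2) carries essentially all of the content.

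For part (1), recall that $P_Y$ sends $\overline{X}\in\Tot(X)$ to the restriction $\overline{X}_{\widehat{Y}}$ of its total order $\preceq_{\overline{X}}$ to $X\setminus Y$. The restriction of a total order to any subset is again a total order, and from $\preceq\,\subset\,\preceq_{\overline{X}}$ one gets $\preceq\cap(X\setminus Y)^2\,\subset\,\preceq_{\overline{X}}\cap(X\setminus Y)^2$. Hence $\overline{X}_{\widehat{Y}}$ is a total order refining the partial order of $X_{\widehat{Y}}$, that is $\overline{X}_{\widehat{Y}}\in\Tot(X_{\widehat{Y}})$; no further input is needed.

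For part (2), I would fix a target $W\in\Tot(X_{\widehat{Y}})$ and construct some $\overline{X}\in\Tot(X)$ whose restriction to $X\setminus Y$ equals $W$. The idea is to first build a \emph{partial} order on all of $X$ that contains $\preceq$ and induces $W$ on $X\setminus Y$, and then pass to a linear extension. Concretely, let $\preceq'$ be the transitive closure of $\preceq\cup W$ on $X$ (regarding $W\subset(X\setminus Y)^2$), and establish two facts: (a) $\preceq'$ restricts to $W$ on $X\setminus Y$; (b) $\preceq'$ is antisymmetric. For (a), the inclusion $W\subset\preceq'$ is clear, and conversely any $\preceq'$-chain joining two elements of $X\setminus Y$ collapses to a single $W$-relation: each maximal excursion $a\preceq y_1\preceq\cdots\preceq y_m\preceq b$ through $Y$ has endpoints $a,b\in X\setminus Y$ with $a\preceq b$ by transitivity of $\preceq$ on $X$, so $a\,W\,b$ because $W$ refines $\preceq\cap(X\setminus Y)^2$; chaining these relations in $W$ gives the claim.

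The main obstacle is the antisymmetry (b). The crucial structural point — and the reason the construction is consistent at all — is that $\preceq\cap(X\setminus Y)^2$ already records every relation forced by a path through $Y$, so the refinement $W$ can never contradict such a forced relation. I would make this precise by ruling out a nontrivial $\preceq'$-cycle. Collapsing the $Y$-excursions of such a cycle exactly as in (a) turns its vertices lying in $X\setminus Y$ into a closed $W$-walk, which forces them all equal since $W$ is total; what then remains is a $\preceq$-cycle contained in $Y$ together with at most one vertex $z\in X\setminus Y$, and any nonempty excursion $z\preceq y_1\preceq\cdots\preceq y_m\preceq z$ would force $z=y_1\in Y$ by antisymmetry of $\preceq$, which is absurd. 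Hence the cycle is trivial and $\preceq'$ is a partial order. Since $X$ is finite, $\preceq'$ admits a linear extension $\overline{X}\in\Tot(X)$, which automatically satisfies $\preceq\subset\preceq_{\overline{X}}$; and because $\preceq_{\overline{X}}\cap(X\setminus Y)^2$ is a total order containing the total order $W=\preceq'\cap(X\setminus Y)^2$, the two coincide. Therefore $P_Y(\overline{X})=W$, which proves surjectivity.
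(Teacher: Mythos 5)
Your proof is correct, but it takes a genuinely different route from the paper's. For part (1) the two arguments coincide (restriction of a total extension is a total extension). For part (2) the paper gives an explicit construction of a preimage of $W$: for each $y\in Y$ it chooses the $\preceq_W$-least element $m_y$ of $y^{\succ X}\setminus Y$, groups the elements of $Y$ into blocks $Y_x=\{y\mid m_y=x\}$, inserts each block immediately below its anchor $x$ in $W$ with an arbitrarily chosen total order on the block, and then verifies directly that the resulting relation is a total order containing $\preceq$ and restricting to $W$. You instead take the transitive closure $\preceq'$ of $\preceq\cup W$, prove it is a partial order by the collapsing-of-$Y$-excursions argument, and invoke the existence of a linear extension of a finite poset. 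The essential content is the same in both cases --- namely that any relation between elements of $X\setminus Y$ forced by a chain passing through $Y$ is already present in $\preceq\cap(X\setminus Y)^2$ and hence in $W$, so $W$ cannot conflict with $\preceq$ --- but your version isolates this point cleanly and delegates the rest to Szpilrajn, whereas the paper's version produces a concrete witness (and in fact parametrizes many preimages, in the spirit of its Lemma 2.10(2)). One small advantage of your argument is that it sidesteps an edge case the paper's construction glosses over, namely elements $y\in Y$ for which $y^{\succ X}\setminus Y$ is empty, so that no anchor $m_y$ exists; the explicit construction needs a separate convention for those, while the transitive-closure argument does not.
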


\begin{proof}
    \begin{enumerate}[(1)]
        \item By the definition of $\Tot(X)$, $\preceq_{\overline{X}_{\widehat{Y}}}$ is a total order and
        $$\preceq_{X_{\widehat{Y}}}\subset \preceq_{\overline{X}_{\widehat{Y}}} \text{ for } \overline{X}\in \Tot(X)$$
        we have $P_{Y}(\overline{X})\in \Tot(X_{\widehat{Y}})$ i.e. $P_Y$ is a function from $\Tot(X)$ to $\Tot(X_{\widehat{Y}})$.
        \item For $W = \overline{X_{\widehat{Y}}} \in \Tot(X_{\widehat{Y}})$, we will construct $\overline{X}\in \Tot(X)$ s.t. $P_Y(\overline{X}) = W$. Now for each $y\in Y$, we pick a minimal element of $(y^{\succ X}\setminus Y, \preceq_W)$ and denote it by $m_y$. Let $m = \{m_y\,|\,y\in Y\}$. For $x\in m$, we define $Y_x = \{y\in Y\,|\,m_y = x\}$. For each $x\in m$, we pick a $Z_x \in \Tot(Y_x)$ and define
        $$\preceq_{\overline{X}} = \preceq_W \cup \bigcup_{x\in m} \overline{\preceq_{Z_x}} \cup \bigcup_{\substack{x, x'\in m\\ x\prec_{W} x'}} \preceq_{Y_{x,x'}},$$
        where
        $$\overline{\preceq_{Z_x}} = \preceq_{Z_x} \cup \{(a, b)\,|\,a\in Z_x,\ b\in x^{\succeq W}\} \cup \{(a, b)\,|\,a\in x^{\prec W}, b\in Z_x\}$$
        $$\preceq_{Y_{x,x'}} = \{(a, b)\,|\,a\in Y_x,\ b\in Y_{x'}\}.$$
        Now for each $y\in Y$, we need to check $\{(p, y)\,|\,p\in y^{\preceq X}\} \subset \preceq_{\overline{X}}$ and $\{(y, q)\,|\,q\in y^{\succeq X}\} \subset \preceq_{\overline{X}}$.\\
        For $p\in y^{\preceq X} \setminus Z_{m_y}$, since $y^{\preceq X} \subset m_y\!^{\preceq X}$, we have
        $$\{(p, y)\,|\,p\in y^{\preceq X} \setminus Z_{m_y}\} \subset \preceq_{\overline{X}},$$
        and for $p\in y^{\preceq X} \cap Z_{m_y}$, since $Z_{m_y} \in \Tot(Y_{m_y})$, we have
        $$\{(p, y)\,|\,p\in y^{\preceq X} \cap Z_{m_y}\} \subset \preceq_{\overline{X}}.$$
        Similarly, since $Z_{m_y} \in \Tot(Y_{m_y})$, by definition of $m_y$, we have
        $$\{(y, q)\,|\,q\in y^{\succeq X}\} \subset \preceq_{\overline{X}}.$$
        Hence, $(X, \preceq_{\overline{X}},\delta) \in \Tot(X)$. By definition of $\preceq_{\overline{X}}$, we have $P_Y(\overline{X}) = W$ i.e., $P_{Y}$ is a surjection.
    \end{enumerate}
\end{proof}

\begin{defn}
    Let $X$ be a labeled poset. For any $Y\subset X$ and $\overline{X}\in \Tot(X)$, we define a relation on $P_{Y}^{-1}(\overline{X_{\widehat{Y}}})$ by
    $$\overline{X} \sim_T \overline{X}' \iff \overline{X} = T_{Y_1, Y_2}(\overline{X}')$$
    for some $Y_1, Y_2 \in \Tot(Y).$
\end{defn}

\begin{lem}\label{lem5}
    Let $X$ be a labeled poset. For any $Y\subset X$ and $\overline{X}\in \Tot(X)$, $\sim_T$ is an equivalence relation on $P_{Y}^{-1}(\overline{X_{\widehat{Y}}})$.
\end{lem}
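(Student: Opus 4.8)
The plan is to identify $\sim_T$ with a relation that is evidently an equivalence relation --- ``having the same external order'' --- and to read off the three axioms from that identification. The key observation is that $T_{Y_1,Y_2}$ modifies only the comparisons lying inside $Y\times Y$. For $\overline{X}\in P_Y^{-1}(\overline{X_{\widehat{Y}}})$ I decompose $\preceq_{\overline{X}}=R(\overline{X})\sqcup(\preceq_{\overline{X}}\cap(Y\times Y))$, where $R(\overline{X})\coloneqq\preceq_{\overline{X}}\setminus(Y\times Y)$ collects the comparisons meeting $X\setminus Y$ and the internal part $\preceq_{\overline{X}}\cap(Y\times Y)$ is the induced total order $Y(\overline{X})\in\Tot(Y)$. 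Since $\preceq_{Y_1},\preceq_{Y_2}\subset Y\times Y$, deleting $\preceq_{Y_1}$ and adjoining $\preceq_{Y_2}$ leaves every pair of $R(\overline{X})$ untouched; hence $R\big(T_{Y_1,Y_2}(\overline{X})\big)=R(\overline{X})$ for all $Y_1,Y_2\in\Tot(Y)$ (here I read $T_{Y_1,Y_2}$ off its defining formula, which makes sense for every $\overline{X}$ in the fiber).

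First I would establish the characterization $\overline{X}\sim_T\overline{X}'\iff R(\overline{X})=R(\overline{X}')$. The forward implication is the observation just made. For the converse, assuming $R(\overline{X})=R(\overline{X}')=:R$, I exhibit an explicit witness: taking $Y_1\coloneqq Y(\overline{X}')$ and $Y_2\coloneqq Y(\overline{X})$, and using $\preceq_{Y_1}\subset\preceq_{\overline{X}'}$ together with the disjointness of $R$ from $Y\times Y$, one finds $\preceq_{\overline{X}'}\setminus\preceq_{Y_1}=R$ and therefore $T_{Y_1,Y_2}(\overline{X}')=(X,R\cup\preceq_{Y_2},\delta)=\overline{X}$, so that $\overline{X}\sim_T\overline{X}'$.

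Granting this characterization, the three axioms are immediate: the condition $R(\overline{X})=R(\overline{X}')$ is exactly the relation induced on the fiber by the map $\overline{X}\mapsto R(\overline{X})$, and any relation of the form $f(x)=f(y)$ is reflexive, symmetric, and transitive. If instead one prefers to argue through $T$ directly, the same three computations reappear: reflexivity from $T_{Y(\overline{X}),Y(\overline{X})}(\overline{X})=\overline{X}$, symmetry by interchanging the two subscripts as in the inverse computation in the proof of Lemma \ref{lem3}, and transitivity by composing two subscript-swaps whose shared middle order is $Y(\overline{X}')$.

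The step requiring the most care is the innocuous-looking phrase ``$T_{Y_1,Y_2}$ read off its formula''. The operation $T_{Y_1,Y_2}$ is defined on $\Tot_{Y_1}(X)$ and its output need not be a poset in general, whereas the fiber $P_Y^{-1}(\overline{X_{\widehat{Y}}})$ may well contain total orders in which $Y$ is not an interval. What rescues the argument is that I only ever apply $T$ in situations where the computation forces the output to equal a prescribed member of the fiber ($\overline{X}$ or $\overline{X}'$): there the disjointness of $R(\cdot)$ from $Y\times Y$ guarantees simultaneously that the external part is transported verbatim and that the internal part is replaced cleanly by $\preceq_{Y_2}$, so the result is a genuine total order lying in the fiber. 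Checking this compatibility is the only point where anything beyond set-theoretic bookkeeping is needed.
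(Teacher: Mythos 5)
Your proof is correct. The paper's own proof verifies the three axioms directly with explicit witnesses: reflexivity from $\overline{X}=T_{Y_1,Y_1}(\overline{X})$ with $Y_1=(Y,\preceq_{\overline{X}})$, symmetry by swapping the two subscripts, transitivity by composing two such swaps --- exactly the ``direct route'' you sketch in your third paragraph. Your primary route is a genuine repackaging: you first prove the characterization $\overline{X}\sim_T\overline{X}'\iff R(\overline{X})=R(\overline{X}')$ and then obtain all three axioms at once, since equality of an invariant is automatically an equivalence relation. This buys two things the paper's argument leaves implicit. First, it describes the equivalence classes concretely as the fibers of $\overline{X}\mapsto R(\overline{X})$, which is precisely the description used later in the proof of Theorem \ref{thm2} when $E_W$ is put in bijection with $\prod_{V} V_W$. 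Second, you correctly flag that $T_{Y_1,Y_2}$ is, as literally defined, only given on $\Tot_{Y_1}(X)$ --- which forces $Y$ to be convex in $\overline{X}'$ --- whereas the fiber $P_Y^{-1}(W)$ for general $Y\subset X$ contains total orders in which $Y$ is not convex; without your convention of reading $T_{Y_1,Y_2}$ off its defining formula, reflexivity would fail for such $\overline{X}$, and the paper's proof silently uses the same convention. One small point worth stating explicitly in your first step: the containment $\preceq_{Y_1}\subset\preceq_{\overline{X}'}$ forces $\preceq_{Y_1}=\preceq_{\overline{X}'}\cap(Y\times Y)$ (two total orders on the same finite set, one contained in the other, coincide), which is what guarantees $\preceq_{\overline{X}'}\setminus\preceq_{Y_1}=R(\overline{X}')$ exactly rather than something larger; this is the only place where your identification of the internal part could conceivably go wrong, and it does not.
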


\begin{proof}
    \begin{enumerate}[(1)]
        \item Reflexivity: For all $\overline{X}\in \Tot(X)$ we have $(Y, \preceq_{\overline{X}})\in \Tot(Y)$. This imply $\overline{X} = T_{Y,Y}(\overline{X})$.
        \item Symmetry: If $\overline{X} \sim_T \overline{X}'$, then $\overline{X} = T_{Y_1, Y_2}(\overline{X}')$. This implies $(Y,\preceq_{\overline{X}}) = Y_2$. Similarly, we take $(Y, \preceq_{\overline{X}'})\in \Tot(Y)$. Then, we have $T_{Y_2,Y}(\overline{X}) = \overline{X}'$.
        \item Transitivity: If $\overline{X} \sim_T \overline{X}'$ and $\overline{X}' \sim_T \overline{X}''$, then $\overline{X} = T_{Y_1, Y_2}(\overline{X}')$ and $\overline{X}' = T_{Y_1', Y_2'}(\overline{X}'')$. Similarly, we take $(Y, \preceq_{\overline{X}''})\in \Tot(Y)$. Then, we have $T_{Y,Y_2}(\overline{X}'') = \overline{X}$.
    \end{enumerate}
\end{proof}

\begin{defn}
    By Lemma \ref{lem5}, for $W \in \Tot(X_{\widehat{Y}})$ and $\overline{X}\in P_{Y}^{-1}(W)$, we define the equivalence class of $\overline{X}$ as
    $$[\overline{X}] \coloneqq \{\overline{X}' \in P_{Y}^{-1}(W) \,|\, \overline{X}' \sim_T \overline{X}\}$$
    and the corresponding partition of $P_{Y}^{-1}(W)$ as
    $$E_W = P_{Y}^{-1}(W)/\sim_T \coloneqq \{[\overline{X}]\,|\,\overline{X}\in P_{Y}^{-1}(W)\}.$$
\end{defn}
With our notations, Goncharov's formula can be rewritten as follows:
\begin{thm}[|8|, Theorem 2.4; |2|, Theorem 1.2]
    Let $X$ be a totally ordered set. Then, we have
    $$\Delta(I^\mfk_\gamma(X)) \coloneqq \sum_{Y\subset X} \prod_{Z\in C_{X}(Y)} I^\afk_{(\rightarrow_{\widetilde{X}} Z;Z \rightarrow_{\widetilde{X}})}(Z)\otimes I^\mfk_\gamma(X_{\widehat{Y}})$$
    with our notations.
\end{thm}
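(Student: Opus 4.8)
The plan is to reduce the statement to the classical Goncharov coproduct formula already quoted, by setting up a dictionary between the index notation $i_0 < \cdots < i_{s+1}$ of that formula and the subset notation $Y\subset X$ used here. Since $X$ is totally ordered, $\Tot(X) = \{X\}$, so $I^\mfk_\gamma(X)$ is a single motivic iterated integral. Writing $X = \{x_1 \prec \cdots \prec x_k\}$ and passing to the extension $\widetilde{X}_\gamma$, I set $a_i \coloneqq \widetilde{\delta}(x_i)$ for $1 \le i \le k$, together with $a_0 \coloneqq \gamma(0)$ and $a_{k+1} \coloneqq \gamma(1)$, so that $I^\mfk_\gamma(X) = I^\mfk_\gamma(a_0; a_1, \ldots, a_k; a_{k+1})$. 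Applying the quoted formula then expresses $\Delta(I^\mfk_\gamma(X))$ as a sum over $s$ and indices $i_0 = 0 < i_1 < \cdots < i_s < i_{s+1} = k+1$.

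The core of the argument is the bijection between such index data and subsets $Y\subset X$: to the kept indices $i_1 < \cdots < i_s$ I associate $Y \coloneqq X \setminus \{x_{i_1}, \ldots, x_{i_s}\}$, the elements contracted into the left tensor factor. Under this correspondence the right-hand factors match immediately, since $X_{\widehat{Y}}$ is exactly the chain on the kept indices and its own extension recovers the endpoints $a_0, a_{k+1}$, giving $I^\mfk_\gamma(X_{\widehat{Y}}) = I^\mfk_\gamma(a_0; a_{i_1}, \ldots, a_{i_s}; a_{k+1})$.

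For the left factors I would verify that in a total order the connected components $C_X(Y)$ are precisely the maximal runs of consecutive elements of $Y$, that is, the nonempty gaps $\{x_{i_p+1}, \ldots, x_{i_{p+1}-1}\}$; this uses that $x \leftrightarrow x'$ reduces to ``immediate predecessor/successor'' once $\preceq$ is total. The key local computation is that for such a block $Z$ the sets $Z^{\prec \widetilde{X}}$ and $Z^{\succ \widetilde{X}}$ each possess a unique maximal, respectively minimal, element; here the passage to $\widetilde{X}$ is essential, as it supplies the bounding elements $x_0, x_{k+1}$ and forces $Z^{\rightarrow \widetilde{X}}$ and $Z^{\leftarrow \widetilde{X}}$ to be singletons. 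I would then identify $\rightarrow_{\widetilde{X}} Z = x_{i_p}$ and $Z \rightarrow_{\widetilde{X}} = x_{i_{p+1}}$, the immediate predecessor and successor of the block, so that $I^\afk_{(\rightarrow_{\widetilde{X}} Z;\, Z \rightarrow_{\widetilde{X}})}(Z) = I^\afk(a_{i_p}; a_{i_p+1}, \ldots, a_{i_{p+1}-1}; a_{i_{p+1}})$.

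Finally I would reconcile the two products: Goncharov's formula multiplies over all $s+1$ gaps $0 \le p \le s$, whereas $\prod_{Z \in C_X(Y)}$ runs only over the nonempty blocks. Each empty gap contributes $I^\afk(a_{i_p}; a_{i_{p+1}}) = \pi(I^\mfk_\gamma(a_{i_p}; a_{i_{p+1}})) = 1$ by Proposition \ref{propMI}(1), so discarding it leaves the product unchanged, and the two expressions agree term by term. The only genuinely delicate point is the endpoint identification of the preceding paragraph; everything else is bookkeeping once the dictionary is fixed.
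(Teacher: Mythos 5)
Your argument is correct, and it is exactly the verification the paper leaves implicit: the paper states this theorem without proof, as a notational restatement of the quoted Goncharov formula, and your dictionary (subsets $Y$ versus index tuples $i_0<\cdots<i_{s+1}$, connected components as maximal consecutive runs, the extension $\widetilde{X}$ supplying the bounding elements so that $Z^{\rightarrow\widetilde{X}}$ and $Z^{\leftarrow\widetilde{X}}$ are singletons, and empty gaps contributing $I^\afk(a_{i_p};a_{i_{p+1}})=1$) is precisely the intended translation. No discrepancy with the paper's approach.
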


\begin{defn}
    Let $X$ be a labeled poset and $\gamma$ be a path. For $Y \subset X$, we define
    $$\Delta_Y(I^\mfk_\gamma(X)) \coloneqq \sum_{\overline{X}\in \Tot(X)} \prod_{Z\in C_{\overline{X}}(Y)} I^\afk_{(\rightarrow_{\widetilde{\overline{X}}} Z;Z \rightarrow_{\widetilde{\overline{X}}})}(Z)\otimes I^\mfk_\gamma(\overline{X}_{\widehat{Y}}).$$
\end{defn}

\begin{rem}\label{rem}
    Let $X$ be a labeled poset and $\gamma$ be a path. Then we have the following relation.
    $$\Delta(I^\mfk_\gamma(X)) = \sum_{Y\subset X} \Delta_Y(I^\mfk_\gamma(X)).$$
\end{rem}

\begin{thm}\label{thm2}
    Let $X$ be a labeled poset, $\gamma$ be a path and $Y \subset X$. If for all $Z\in C_X(Y)$, $Z\in X_{|Z|}$ and $|Z^{\rightarrow \widetilde{X}}| = |Z^{\leftarrow \widetilde{X}}| = 1$, then one has
    $$\Delta_Y(I^\mfk_\gamma(X)) = \prod_{Z\in C_X(Y)} I^\afk_{(p_Z;q_Z)}(Z)\otimes I^\mfk_\gamma(X_{\widehat{Y}}),$$
    where $p_Z \coloneqq \rightarrow_{\widetilde{X}} Z$ and $q_Z \coloneqq \leftarrow_{\widetilde{X}} Z$.
\end{thm}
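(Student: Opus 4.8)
The plan is to follow the same strategy as the proof of Theorem \ref{thm1}, but organized around the surjection $P_Y : \Tot(X) \to \Tot(X_{\widehat{Y}})$ of Lemma \ref{lem4} and the equivalence relation $\sim_T$ of Lemma \ref{lem5}, replacing the single application of path composition by a combined use of path composition and the shuffle product.

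First I would fix the right-hand tensor factor. By Lemma \ref{lem4} the map $P_Y$ is a surjection, so the sum defining $\Delta_Y(I^\mfk_\gamma(X))$ can be grouped according to the value $W = P_Y(\overline{X}) = \overline{X}_{\widehat{Y}} \in \Tot(X_{\widehat{Y}})$, giving
$$\Delta_Y(I^\mfk_\gamma(X)) = \sum_{W\in \Tot(X_{\widehat{Y}})} \Bigl( \sum_{\overline{X} \in P_Y^{-1}(W)} \prod_{Z'\in C_{\overline{X}}(Y)} I^\afk_{(\rightarrow_{\widetilde{\overline{X}}} Z';Z' \rightarrow_{\widetilde{\overline{X}}})}(Z') \Bigr) \otimes I^\mfk_\gamma(W).$$
Once the inner sum is shown to be independent of $W$ and equal to $\prod_{Z\in C_X(Y)} I^\afk_{(p_Z;q_Z)}(Z)$, summing $I^\mfk_\gamma(W)$ over $\Tot(X_{\widehat{Y}})$ reconstitutes $I^\mfk_\gamma(X_{\widehat{Y}})$ by Definition \ref{def} and finishes the proof.

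Second, for fixed $W$ I would analyze the fiber $P_Y^{-1}(W)$ through the partition into $\sim_T$-equivalence classes from Lemma \ref{lem5}. An element $\overline{X}$ of the fiber is determined by two pieces of data: the distribution of the elements of $Y$ among the gaps of $W$ (that is, the relative order of each $y\in Y$ with respect to every complement element, which $T_{Y_1,Y_2}$ leaves unchanged), and the internal total order on $Y$; a $\sim_T$-class is exactly a fixed gap-distribution $\phi$, with the internal order ranging over all refinements compatible with $\phi$. Writing $s_1 \prec_W \cdots \prec_W s_m$ for the elements of $X_{\widehat{Y}}$ and $s_0 = x_0$, $s_{m+1} = x_1$ for the endpoints coming from $\widetilde{X}$, each class $\phi$ splits $Y$ into subposets $R_0^\phi, \ldots, R_m^\phi$ lying in the successive gaps; since the connected components $C_{\overline{X}}(Y)$ are exactly the nonempty blocks $R_i^\phi$ and the internal orders range independently over each $\Tot(R_i^\phi)$, Definition \ref{def} yields
$$\sum_{\overline{X} \text{ in class } \phi} \prod_{Z'\in C_{\overline{X}}(Y)} I^\afk_{(\rightarrow Z';Z' \rightarrow)}(Z') = \prod_{i=0}^{m} I^\afk_{(s_i;s_{i+1})}(R_i^\phi).$$

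Third, I would sum over the classes $\phi$ and factor the outcome over the components. Here the hypotheses enter decisively: because each $Z\in C_X(Y)$ lies in $X_{|Z|}$ and has a unique entry $p_Z$ and exit $q_Z$, every element of $Z$ is forced into the gaps strictly between $p_Z$ and $q_Z$, and two comparable components are separated by a complement element so that their windows do not overlap. Summing the distributions of a single component over the complement elements inside its window and collapsing the resulting telescoping products is precisely the iterated path composition of Proposition \ref{propMI}(4), which rebuilds $I^\afk_{(p_Z;q_Z)}(Z)$; when two incomparable components share a window their elements may fall into a common gap, and the corresponding sum over interleavings is resolved by the shuffle product of Proposition \ref{propMI}(3). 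The \emph{main obstacle} is exactly this final bookkeeping: one must show that the single sum over all gap-distributions $\phi$ factors as $\prod_{Z\in C_X(Y)}$ of the per-component sums, i.e. that path composition (for comparable, hence window-disjoint, components) and shuffle (for incomparable components sharing gaps) may be applied simultaneously and independently. I expect this to go by induction on $|C_X(Y)|$, peeling off a component whose window is not strictly contained in another and letting the remaining components distribute over the untouched gaps; the base case is the single-component computation, which is the $D_r'$-type argument already present in Theorem \ref{thm1} and Proposition \ref{prop1-2}.
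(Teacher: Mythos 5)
Your first two steps are correct and are, in substance, identical to the paper's own proof: the paper also groups the sum by $W=P_Y(\overline{X})$ using Lemma \ref{lem4}, splits each fiber into $\sim_T$-classes using Lemma \ref{lem5}, and derives exactly your per-class identity (written there as $\prod_{Z\in C_e(Y)}\sum_{Z'\in \Tot(Z,\preceq_Y)}I^\afk_{(\rightarrow Z;Z\rightarrow)}(Z')$). The genuine gap is your third step, which is where the entire content of the theorem and both hypotheses live, and which you only conjecture ("I expect this to go by induction on $|C_X(Y)|$") rather than prove. For comparison, the paper does not induct: inside each class it first splits every gap block $R_i^\phi$ as a disjoint union $\bigsqcup_{V\in C_X(Y)}(V\cap R_i^\phi)$ and factors it by the shuffle/product formula (Propositions \ref{propYI} and \ref{propMI}), then exchanges the sum over classes $E_W$ with the product over components via a claimed bijection $E_W \cong \prod_{V\in C_X(Y)}V_W$, and finally collapses each per-component sum by path composition. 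Some precise version of that bijection (or your "peeling" argument made exact) is indispensable, and your proposal never supplies it.

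Moreover, the one justification you do offer would fail: from "$Z\in X_{|Z|}$ and $|Z^{\rightarrow\widetilde{X}}|=|Z^{\leftarrow\widetilde{X}}|=1$" you infer that "every element of $Z$ is forced into the gaps strictly between $p_Z$ and $q_Z$", and this inference is false. Take $X=\{a,v_1,v_2\}$ with only the relations $v_1\prec v_2$ and $a\prec v_2$, and $Y=\{v_1,v_2\}$. Then $C_X(Y)=\{Y\}$ (the two elements are adjacent since $v_1\in v_2^{\rightarrow X}$), $Y\in X_2$, $Y^{\rightarrow\widetilde{X}}=\{a\}$, $Y^{\leftarrow\widetilde{X}}=\{x_1\}$, so all stated hypotheses hold with $p_Y=a$, $q_Y=x_1$; yet the linear extension $v_1\prec a\prec v_2$ places $v_1$ outside the window. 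Its class contributes $I^\afk_{(x_0;a)}(\{v_1\})\,I^\afk_{(a;x_1)}(\{v_2\})\otimes I^\mfk_\gamma(\{a\})$ to $\Delta_Y(I^\mfk_\gamma(X))$, a term absent from the claimed right-hand side; with labels $\delta(v_1)=1$, $\delta(v_2)=0$, $\delta(a)=\tfrac{1}{2}$ and $\gamma$ a path from $0$ to $1$ avoiding $\tfrac{1}{2}$, both $\afk$-factors are nonzero motivic logarithms (periods $\log\tfrac{1}{2}$ and $\log 2$), so the term does not vanish. Thus window-confinement is genuinely additional information, not a consequence of the hypotheses, and no bookkeeping can recover your factorization without it. (The same example puts pressure on the paper's own proof, which asserts the bijection $E_W\cong\prod_V V_W$ with no more justification than "since $V\in X_{|V|}$"; but in a blind reconstruction the burden of proving, or correctly strengthening, that claim was on you, and the proposal instead asserts the very statement that needs proof.)
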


\begin{proof}
    First, by definition of $\Delta_Y(I^\mfk_\gamma(X))$ and Lemma \ref{lem4} we have
    \begin{align*}
        \Delta_Y(I^\mfk_\gamma(X)) & = \sum_{\overline{X}\in \Tot(X)} \prod_{Z\in C_{\overline{X}}(Y)} I^\afk_{(\rightarrow_{\widetilde{\overline{X}}} Z;Z \rightarrow_{_{\widetilde{\overline{X}}}})}(Z)\otimes I^\mfk_\gamma(\overline{X}_{\widehat{Y}})\\
        & = \sum_{W \in \Tot(X_{\widehat{Y}})} \sum_{\overline{X}\in P_{Y}^{-1}(W)} \prod_{Z\in C_{\overline{X}}(Y)} I^\afk_{(\rightarrow Z;Z \rightarrow)}(Z)\otimes I^\mfk_\gamma(W).
    \end{align*}
    By Lemma \ref{lem5}, we have
    $$\Delta_Y(I^\mfk_\gamma(X)) = \sum_{W\in \Tot(X_{\widehat{Y}})} \sum_{e\in E_W} \sum_{\overline{X}\in e} \prod_{Z\in C_{\overline{X}}(Y)} I^\afk_{(\rightarrow Z;Z \rightarrow)}(Z)\otimes I^\mfk_\gamma(W).$$
    Note that $\overline{X} \in [\overline{X}']$ implies that $C_{\overline{X}}(Y) = C_{\overline{X}'}(Y)$, $\rightarrow_{\widetilde{\overline{X}}} Z = \rightarrow_{\widetilde{\overline{X}'}} Z$, and $\leftarrow_{\widetilde{\overline{X}}} Z = \leftarrow_{\widetilde{\overline{X}'}} Z$, so we may use the notation $C_e(Y) \coloneqq C_{\overline{X}}(Y)$ for $\overline{X} \in e$. Thus, we may swap the order of the summation and the product to find
    $$\Delta_Y(I^\mfk_\gamma(X)) = \sum_{W\in \Tot(X_{\widehat{Y}})} \sum_{e\in E_W} \prod_{Z\in C_e(Y)} \left( \sum_{Z' \in [Z]} I^\afk_{(\rightarrow_{\widetilde{e}} Z;Z \rightarrow_{\widetilde{e}})}(Z') \right) \otimes I^\mfk_\gamma(W),$$
    where $Z'\in [Z]$ means that $Z'$ is the set $Z$ with the new order determined by $\overline{X} \in e$ and the same labeling map i.e. $Z' = T_{Y_1,Y_2}(Z)$ for some $Y_1, Y_2 \in \Tot(Y)$. Since the order of $Z' \in [Z]$ is contains the order of $Y$, we have
    $$\Delta_Y(I^\mfk_\gamma(X)) = \sum_{W\in \Tot(X_{\widehat{Y}})} \sum_{e\in E_W} \prod_{Z\in C_e(Y)} \left( \sum_{Z' \in \Tot(Z,\preceq_Y)} I^\afk_{(\rightarrow Z;Z \rightarrow)}(Z') \right) \otimes I^\mfk_\gamma(W).$$
    Note that $\{V\cap Z \,|\, V\in C_X(Y)\} = C_Y(Z)$ and $Z = \bigsqcup_{S\in C_Y(Z)}S$. Thus, by proposition \ref{propYI}, we have
    \begin{align*}
        \Delta_Y(I^\mfk_\gamma(X)) & = \sum_{W\in \Tot(X_{\widehat{Y}})} \sum_{e\in E_W} \prod_{Z\in C_e(Y)} \left(  I^\afk_{(\rightarrow Z;Z \rightarrow)} \left( \bigsqcup_{V\in C_{X}(Y)}(V\cap Z) \right) \right) \otimes I^\mfk_\gamma(W)\\
        & = \sum_{W\in \Tot(X_{\widehat{Y}})} \sum_{e\in E_W} \prod_{Z\in C_e(Y)} \prod_{V\in C_{X}(Y)} I^\afk_{(\rightarrow Z;Z \rightarrow)}(V\cap Z)\otimes I^\mfk_\gamma(W).
    \end{align*}
    The products are finite products, so we can change the order of the products to get
    \begin{equation}
        \Delta_Y(I^\mfk_\gamma(X)) = \sum_{W\in \Tot(X_{\widehat{Y}})} \sum_{e\in E_W} \prod_{V\in C_{X}(Y)} \prod_{Z\in C_e(Y)} I^\afk_{(\rightarrow Z;Z \rightarrow)}(V\cap Z)\otimes I^\mfk_\gamma(W).\label{eq:00}
    \end{equation}
    For $W\in \Tot(X_{\widehat{Y}})$ and $V\in C_X(Y)$, since $|V^{\rightarrow \widetilde{X}}| = |V^{\leftarrow \widetilde{X}}| = 1$, we may let $\rightarrow_{\widetilde{X}} V = p_V$ and $\leftarrow_{\widetilde{X}} V = q_V$ and define
    $$B_{W,V} = \{x\in W \,|\, p_V\preceq_W x \prec_W q_V\}$$
    and
    $$V_W \coloneqq \left\{(V_x)_{x\in B_{W,V}} \,\left|\, \substack{\bigcup_{x\in B_{W,V}}V_x = V \text{ and } \\ \left(\bigcup_{x\in B_{W,V}} \preceq_{V_x}\right) \cup \left(\bigcup_{\substack{x,x'\in B_{W,V} \\ x\prec x'}} \{(a,b) \,|\, a\in V_x,\ b\in V_{x'}\}\right) = \preceq_V}
    \right.\right\}.$$
    Now, note that in the right hand side of (\ref{eq:00}), $C_{X}(Y)$ is independent of $e$. Also, since $V\in X_{|V|}$, $E_W$ has a 1-1 correspondence with $\prod_{V\in C_X(Y)} V_W$. Thus, we can change the order of summation $\sum_{e\in E_W}$ and the product $\prod_{V\in C_{X}(Y)}$ to get
    $$\Delta_Y(I^\mfk_\gamma(X)) = \sum_{W\in \Tot(X_{\widehat{Y}})} \prod_{V\in C_X(Y)} \sum_{(V_x)_x\in V_W} \prod_{x\in B_{W,V}} I^\afk_{(x;x \rightarrow_{\widetilde{W}})}(V_x)\otimes I^\mfk_\gamma(W).$$
    By (5) of Proposition \ref{propMI}, we have
    $$\Delta_Y(I^\mfk_\gamma(X)) = \sum_{W\in \Tot(X_{\widehat{Y}})} \prod_{V\in C_X(Y)} I^\afk_{(p_V;q_V)}(V)\otimes I^\mfk_\gamma(W)$$
    and by Definition \ref{def}, we get
    $$\Delta_Y(I^\mfk_\gamma(X)) = \prod_{V\in C_X(Y)} I^\afk_{(p_V;q_V)}(V)\otimes I^\mfk_\gamma(X_{\widehat{Y}}),$$
    which completes the proof.
\end{proof}

\section{Examples}
In this section, we will explicitly compute a few examples of Theorems \ref{thm1} and \ref{thm2}. Our first two examples are also special cases of (the motivic version of) Schur multiple zeta values (hereafter abbreviated as SMZV) due to a theorem by Hirose, Murahara and Onozuka. Thus, let us first give a definition of Schur multiple zeta values before going into the examples.
\begin{defn}[Young diagram]
    Let $\lambda$ be a finite subset of $\NN^2$.
    $\lambda$ is called a Young diagram if $\lambda$ satisfying
    $$(i+1,j) \in \lambda \text{ or } (i,j+1) \in \lambda \implies (i,j) \in \lambda \text{ for } (i,j) \in \NN^2.$$
\end{defn}
\begin{defn}[skew Young diagram]
    $\lambda = \lambda' \setminus \lambda''$ is called a skew Young diagram, if there exist Young diagrams $\lambda'$ and $\lambda''$ such that $\lambda' \subset \lambda''$.
\end{defn}
\begin{defn}[semi-standard Young tableaux]
    For a fixed skew Young diagram $\lambda$, we define the set of semi-standard Young tableaux $\text{SSYT}(\lambda)$ as the set of maps $f$ from $\lambda$ to $\NN$ satisfying the conditions:
    \begin{itemize}
        \item If $(i,j), (i,j+1) \in \lambda$ then $f(i,j) \leq f(i,j+1)$,
        \item If $(i,j), (i+1,j) \in \lambda$ then $f(i,j) < f(i+1,j)$.
    \end{itemize}
    We call a map $\mathbf{k}:\lambda \to \NN$ an index for $\lambda$. Furthermore, we say that $\mathbf{k}$ is admissible if $\mathbf{k}(i,j) \geq 2$ for any $(i,j) \in \lambda$ such that $(i+1,j) \notin \lambda$ and $(i,j+1) \notin \lambda$.
\end{defn}
\begin{defn}[Schur multiple zeta values]
    For an admissible index $\mathbf{k}$, the Schur multiple zeta value is defined by
    $$\zeta(\mathbf{k}) = \sum_{f\in \text{SSYT}(\lambda)} \frac{1}{\prod_{(i,j) \in \lambda}f(i,j)^{\mathbf{k}(i,j)}}.$$
\end{defn}
Hirose, Murahara, and Onozuka proved that Yamamoto's integral expression can also be used to represent SMZVs with constant entries on the diagonals.
\begin{thm}[|4|, Theorem 1.2]\label{thmSY}
    If $\mathbf{k}$ is an admissible index such that $\mathbf{k}(i,j) = \mathbf{k}(i+1,j+1)$ for all $(i,j),(i+1,j+1) \in \lambda$, then the SMZV $\zeta(\mathbf{k})$ has Yamamoto's integral expression, i.e. we have
    $$
    \zeta \left( \ {\footnotesize \ytableausetup{centertableaux, boxsize=1.2em}
    \begin{ytableau}
        \none & \none & \none & \none &  k_1   \\
        \none &  k_i & \none & \none[\iddots] & \none \\
        \none & \none & \none[\ddots] \\
        \none & \none[\iddots] & \none & k_i  \\
            k_r & \none & \none
    \end{ytableau}}
    \ \right)
    = I \left( \
    \begin{xy}
        {(0,-6) \ar @{{*}-o} (4,-2)},
        {(4,-2) \ar @{.o} (8,2)},
        {(0,-5) \ar @/^2mm/ @{-}^{k_1} (7,2)},
        {(8,2) \ar @{.} (12,6)},
        {(8,2) \ar @{.} (12,-2)}
    \end{xy}
    \cdots
    \begin{xy}
        {(32,4) \ar @{{*}-o} (36,8)},
        {(32,4) \ar @{{*}-o} (36,0)},
        {(32,-4) \ar @{{*}-} (36,0)},
        {(32,-4) \ar @{.} (36,-8)},
        {(32,-12) \ar @{{*}-o} (36,-8)},
        {(32,4) \ar @/_2mm/ @{-}_{l_i} (32,-11)},
        {(36,8) \ar @{.o} (40,12)},
        {(36,8) \ar @{.o} (40,4)},
        {(36,0) \ar @{.} (40,4)},
        {(36,0) \ar @{.} (40,-4)},
        {(36,-8) \ar @{.o} (40,-4)},
        {(32,5) \ar @/^2mm/ @{-}^{k_i} (39,12)},
    \end{xy}
    \cdots
    \begin{xy}
        {(44,2) \ar @{.} (48,-2)},
        {(44,-6) \ar @{.{*}} (48,-2)},
        {(48,-2) \ar @{-o} (52,2)},
        {(52,2) \ar @{.o} (56,6)},
        {(48,-1) \ar @/^2mm/ @{-}^{k_r} (55,6)},
    \end{xy}
    \ \right),$$
    where $l_i$ is the number of $k_i$'s.
\end{thm}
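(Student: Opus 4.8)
The plan is to prove the identity by expanding both sides as explicit multiple series and matching them summand by summand, the bridge being a dictionary between the semistandard tableaux of $\lambda$ and the combinatorics of the labeled poset $X$ displayed on the right-hand side of the asserted equation. Throughout I would work in the original Yamamoto convention recorded in the remark after Definition \ref{defYI}, so that $\omega_0(t)=dt/t$, $\omega_1(t)=dt/(1-t)$ and $\gamma(t)=t$; with this convention the geometric expansion $\omega_1(t)=\sum_{m\ge 0}t^m\,dt$ carries no sign, and admissibility of $\mathbf{k}$ is exactly what makes both the defining series of $\zeta(\mathbf{k})$ and the Yamamoto integral $I_\gamma(X)$ converge, so that all termwise manipulations below are legitimate.

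First I would read off the structure of the given poset $X$ from the figure. Reading $\lambda$ along the diagonals parallel to the main diagonal, the $i$-th diagonal consists of $l_i$ boxes all carrying the common index value $k_i$; this common value is precisely what the hypothesis $\mathbf{k}(i,j)=\mathbf{k}(i+1,j+1)$ supplies, and it is what forces all $l_i$ chains on that diagonal to have equal length. Each box of index $k_i$ corresponds to a chain of $k_i$ vertices, a single vertex with $\delta=1$ (the $\omega_1$-vertex, which will introduce the summation variable equal to that box's tableau entry) followed by $k_i-1$ vertices with $\delta=0$ (the $\omega_0$-vertices, which supply the remaining powers so that the box contributes the factor $f(i,j)^{-k_i}$). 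I would then verify that the order relations between chains in the figure are arranged so that column-adjacent boxes give strict relations $\prec$ while row-adjacent boxes give the incomparable, \emph{weak} connection, the precise assignment being fixed by matching against the diagram.

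Next I would expand $I_\gamma(X)$. Using the expression $I_\gamma(X)=\sum_{Y\in\Tot(X)}I_\gamma(Y)$ already obtained from Proposition \ref{propYI}, each linear extension $Y$ is an ordinary iterated integral over a simplex, which I expand termwise: every $\omega_1$-vertex contributes one summation variable and each trailing block of $\omega_0$-vertices its exponent, so $I_\gamma(Y)$ becomes a nested sum whose strict inequalities are dictated by the total order $Y$. Summing over all $Y\in\Tot(X)$ then reassembles these strictly ordered nested sums, and the key combinatorial claim, essentially an instance of Stanley's $P$-partition decomposition, is that the disjoint union over the $Y$ of the resulting index sets is exactly $\mathrm{SSYT}(\lambda)$, with the column-strict inequalities $f(i,j)<f(i+1,j)$ and the row-weak inequalities $f(i,j)\le f(i,j+1)$ of a tableau emerging from the placement of the $\omega_0$- and $\omega_1$-vertices together with the order relations of $X$. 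Matching summands then yields $\zeta(\mathbf{k})=I_\gamma(X)$.

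The hard part will be this last combinatorial step: showing that the union over linear extensions of the strictly ordered index sets recovers the semistandard conditions, counting each tableau exactly once with neither overcounting nor omission. Concretely one must check that the averaging over incomparable pairs intrinsic to $\sum_{Y\in\Tot(X)}I_\gamma(Y)$ converts strict chains into the precise mixed weak/strict pattern of a tableau, and that the $\omega_1$-vertices of consecutive diagonals line up correctly. It is exactly here that the diagonal-constancy hypothesis is indispensable, for without it the chain lengths on a diagonal would differ, the $\omega_1$-vertices of adjacent diagonals would fail to align, and the weak/strict pattern produced by $X$ would no longer coincide with the row/column pattern of $\lambda$.
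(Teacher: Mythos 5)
First, a point of comparison: the paper does not prove this statement at all --- it is imported from Hirose--Murahara--Onozuka (reference [4], Theorem 1.2) and used as a black box in the Examples section, so there is no internal proof to measure your argument against. Judged on its own merits, your proposal contains a genuine gap, and it sits exactly where you yourself located the ``hard part'': the key combinatorial claim that the disjoint union, over linear extensions $Y\in\Tot(X)$, of the index sets of the nested-sum expansions of $I_\gamma(Y)$ is exactly $\mathrm{SSYT}(\lambda)$ with matching summands. That claim is false, and no repair of the bijection can save it.

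Here is the smallest case with a weak (row) inequality. Take $\lambda$ a single row of two boxes with $\mathbf{k}\equiv(2,2)$, so that $\zeta(\mathbf{k})=\zeta^\star(2,2)=\sum_{a\le b}a^{-2}b^{-2}$; the poset of the theorem is then Yamamoto's zigzag on two black and two white vertices, $b_1\prec w_1$, $b_2\prec w_1$, $b_2\prec w_2$. Since each box contributes exactly one $\omega_1$-vertex, \emph{every} linear extension of $X$ expands into a sum over exactly two strictly ordered indices $m<n$: concretely, the five linear extensions yield
$$4\,\zeta(1,3)+\zeta(2,2),$$
i.e.\ summands $1/(mn^3)$ (four times) and $1/(m^2n^2)$, always with $m<n$. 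On the other side, $\mathrm{SSYT}(\lambda)$ contains the tableaux with equal entries $a=b$, whose summands are $1/a^4$ --- terms with a \emph{single} index and a merged exponent. No such term occurs in any linear-extension expansion, and since all summands are positive, no bijection of index sets preserving summands can exist; splitting the weak inequalities into $<$ and $=$ (the P-partition decomposition you invoke) only makes this explicit, because the equality strata drop the number of summation variables below $|\lambda|$, while the depth of every linear extension of $X$ is pinned at $|\lambda|$. The identity of the theorem is nevertheless true, but only because $4\zeta(1,3)=\zeta(4)$ --- Euler's relation, equivalently the difference of the shuffle and stuffle expansions of $\zeta(2)^2$ --- which is a genuine relation among multiple zeta values that no re-indexing argument can produce. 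The same obstruction recurs at every weak edge of $\lambda$, so the strategy of matching series term by term cannot work in principle; any correct proof (as in [5] for the single-row case and in [4] in general) must manipulate the integrals themselves, e.g.\ by induction on the poset with truncated/partial integrals, rather than compare the series expansions of the linear extensions.
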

Let us define the motivic version of the SMZV with constant entries on the diagonal by Yamamoto's integral on the right-hand side of this theorem. In the following examples, we will make use of the facts
\begin{equation}
    I^\mfk(0;0;1) = I^\mfk(0;1;1) = I^\mfk(a;a_1, \ldots, a_k;a) = 0 \ \ \ (k\geq 1).\label{eq:01}
\end{equation}
\begin{eg}
    Let us consider the case
    $$(X,\preceq,\delta) =
    \begin{xy}
        {(0,-8) \ar @{{*}-o} (4,-4)},
        {(4,-4) \ar @{o.o} (8,0)},
        {(8,0) \ar @{o-{*}} (12,4)},
        {(0,-8) \ar @{{*}-{*}} (-4,-4)},
        {(-4,-4) \ar @{{*}-{*}} (0,0)},
        {(0,0) \ar @{{*}-o} (4,4)},
        {(4,4) \ar @{o.o} (8,8)},
        {(12,4) \ar @{{*}-o} (8,8)},
        {(8,0) \ar @{o.o} (4,4)},
        {(4,-4) \ar @{o-{*}} (0,0)},
        {(0,1) \ar @/^2mm/ @{-}^{n} (7,8)},
    \end{xy}.$$
    In other words,
    \begin{align*}
        X & \coloneqq \{x_1, \ldots, x_n, x_{1}', \ldots, x_{n}'\}\\
        \preceq & \coloneqq \{(x_a, x_b)\,|\,a\leq b\}\cup \{(x_{a}', x_{b}')\,|\,a\leq b\}\cup \{(x_a, x_{b}')\,|\,a\leq b\}\\
        \delta(x) & \coloneqq \begin{cases}
        1 \text{ if } x \in \{x_{1}', x_{2}', x_1, x_n\}\\
        0 \text{ others.}
    \end{cases}
    \end{align*}
    By Theorem \ref{thmSY}, we have
    $$\zeta^\mfk \left( \ {\footnotesize \ytableausetup{centertableaux, boxsize=1.2em}
    \begin{ytableau}
        n & 1\\
        1 & n
    \end{ytableau}}
    \ \right)
    = I^\mfk \left(
    \begin{xy}
        {(0,-8) \ar @{{*}-o} (4,-4)},
        {(4,-4) \ar @{o.o} (8,0)},
        {(8,0) \ar @{o-{*}} (12,4)},
        {(0,-8) \ar @{{*}-{*}} (-4,-4)},
        {(-4,-4) \ar @{{*}-{*}} (0,0)},
        {(0,0) \ar @{{*}-o} (4,4)},
        {(4,4) \ar @{o.o} (8,8)},
        {(12,4) \ar @{{*}-o} (8,8)},
        {(8,0) \ar @{o.o} (4,4)},
        {(4,-4) \ar @{o-{*}} (0,0)},
        {(0,1) \ar @/^2mm/ @{-}^{n} (7,8)},
    \end{xy}\right).$$
    Let us calculate $D_3'(I^\mfk(X))$ using Theorem \ref{thm1}. For convenience, we assume $n\geq 5$ and set $\delta(x_0) = 0, \delta(x_{n+1}') = 1$. First, notice that the irreducible elements in $X_3$ are either of the four types
    \begin{enumerate}[Type 1.]
        \item $Y_{1,m} = \{x_{m},x_{m+1},x_{m+2}\}$.
        \item $Y_{2,m} = \{x_{m}',x_{m},x_{m+1}\}$.
        \item $Y_{3,m} = \{x_{m}',x_{m+1}',x_{m+1}\}$.
        \item $Y_{4,m} = \{x_{m}',x_{m+1}',x_{m+2}'\}$.
    \end{enumerate}
    Thus, by Theorem \ref{thm1} and property of Motivic iterated integrals we have
    \begin{align*}
        D_3'(I(X)) = & I^\lfk_{(x_{0};x_{1}')}(Y_{1,1})\otimes I^\mfk(X_{(x_{0}, \widehat{Y_{1,1}}, x_{1}')}) +  I^\lfk_{(x_{0};x_{2}')}(Y_{2,1})\otimes I^\mfk(X_{(x_0, \widehat{Y_{2,1}}, x_{2}')})\\
        & + I^\lfk_{(x_{1}';x_{3}')}(Y_{2,2})\otimes I^\mfk(X_{(x_{1}', \widehat{Y_{2,2}}, x_{3}')}) + I^\lfk_{(x_{1}';x_4)}(Y_{2,2})\otimes I^\mfk(X_{(x_{1}', \widehat{Y_{2,2}}, x_4)})\\
        & + I^\lfk_{(x_1;x_3)}(Y_{3,1})\otimes I^\mfk(X_{(x_1, \widehat{Y_{3,1}}, x_3)}) + I^\lfk_{(x_{1}';x_4)}(Y_{3,2})\otimes I^\mfk(X_{(x_{1}', \widehat{Y_{3,2}}, x_4)})\\
        & + I^\lfk_{(x_{n-1};x_{n+1}')}(Y_{3,n-1})\otimes I^\mfk(X_{(x_{n-1}, \widehat{Y_{3,n-1}}, x_{n+1}')})\\
        & + I^\lfk_{(x_{n-2}';x_{n+1}')}(Y_{3,n-1})\otimes I^\mfk(X_{(x_{n-2}', \widehat{Y_{3,n-1}}, x_{n+1}')}).
    \end{align*}
    Using Theorem \ref{thm1} or Proposition \ref{prop1-2} is a easy way to find formula of $D_r$ or $D_r'$ for some type of SMZVs.
\end{eg}

\begin{eg}
    Let us consider the case
    $$(X,\preceq,\delta) =
    \begin{xy}
        {(0,-10) \ar @{{*}-o} (4,-6)},
        {(4,-6) \ar @{o.o} (8,-2)},
        {(8,-2) \ar @{o-{*}} (12,2)},
        {(12,2) \ar @{{*}-o} (16,6)},
        {(16,6) \ar @{o.o} (20,10)},
        {(20,10) \ar @{o-{*}} (24,6)},
        {(24,6) \ar @{{*}.{*}} (28,2)},
        {(28,2) \ar @{{*}-{*}} (32,-2)},
        {(0,-9) \ar @/^2mm/ @{-}^{s} (7,-2)},
        {(12,3) \ar @/^2mm/ @{-}^{t} (19,10)},
        {(25,6) \ar @/^2mm/ @{-}^{n} (32,-1)}
    \end{xy}.$$
    In other words,
    \begin{align*}
        X & \coloneqq \{x_1, \ldots, x_s, x_{s+1}, \ldots, x_{s+t}, x_{1}', \ldots, x_{n}'\}\\
        \preceq & \coloneqq \{(x_a, x_b)\,|\,a\leq b\}\cup \{(x_{a}', x_{b}')\,|\,a\leq b\}\cup \{(x_{a}', x_{s+t})\,|\,a \in \{1, \ldots, n\}\}\\
        \delta(x) & \coloneqq \begin{cases}
        1 \text{ if } x \in \{x_{1}', \ldots, x_{n}', x_1, x_{s+1}\}\\
        0 \text{ others.}
    \end{cases}
    \end{align*}
    By Theorem \ref{thmSY}, we have
    $$I^\mfk \left(
    \begin{xy}
        {(0,-10) \ar @{{*}-o} (4,-6)},
        {(4,-6) \ar @{o.o} (8,-2)},
        {(8,-2) \ar @{o-{*}} (12,2)},
        {(12,2) \ar @{{*}-o} (16,6)},
        {(16,6) \ar @{o.o} (20,10)},
        {(20,10) \ar @{o-{*}} (24,6)},
        {(24,6) \ar @{{*}.{*}} (28,2)},
        {(28,2) \ar @{{*}-{*}} (32,-2)},
        {(0,-9) \ar @/^2mm/ @{-}^{s} (7,-2)},
        {(12,3) \ar @/^2mm/ @{-}^{t} (19,10)},
        {(25,6) \ar @/^2mm/ @{-}^{n} (32,-1)}
    \end{xy}\right)
    = \zeta^\mfk \left( \ {\footnotesize \ytableausetup{centertableaux, boxsize=1.2em}
    \begin{ytableau}
        \none & \none & \none & s\\
        1 & \cdots & 1 & t
    \end{ytableau}}
    \ \right).$$
    Let us calculate $\Delta(I^\mfk(X))$ using Theorem \ref{thm2}. For convenience, we set $\delta(x_0) = 0, \delta(x_{s+t+1}) = 1$. First, using Remark \ref{rem} we get
    $$\Delta(I^\mfk(X)) = \sum_{Y\subset X} \Delta_Y(I^\mfk(X)).$$
    Next, using Theorem \ref{thm2}, Proposition \ref{propYI}, and (\ref{eq:01}), we see that $\Delta_Y$ vanishes except for the following three types of $Y$'s.
    \begin{enumerate}[Type 1.]
        \item $Y_{1,m} = \{x_m, \ldots, x_{s+t}, x_{1}', \ldots, x_{n}'\}$, where $1\leq m \leq s+t$.
        \item $Y_{2,m} = \{x_1, \ldots, x_s, x_{s+m}, \ldots, x_{s+t}, x_{1}', \ldots, x_{n}'\}$, where $2\leq m \leq t$.
        \item $Y_{3,m_1,m_2} = \{x_2, \ldots, x_{s+m_1}, x_{s+m_2}, \ldots, x_{s+t}, x_{1}', \ldots, x_{n}'\}$, \\
        where $1\leq m_1 \leq t-2$ and $m_1+1 < m_2 \leq t$.
    \end{enumerate}
    Hence, we can write the formula of $\Delta(I^\mfk(X))$ by three types of $Y$
    $$\Delta(I^\mfk(X)) = \sum_{m = 1}^{s+t} I^\afk_{(x_{m-1};x_{s+t+1})}(Y_{1,m})\otimes I^\mfk(X_{\widehat{Y_{1,m}}})$$
    $$+ \sum_{m = 2}^{t} I^\afk_{(x_0;x_s)}(X_{1,s})I^\afk_{(x_{s+m-1};x_{s+t+1})}(Y_{2,m}\setminus X_{1,s})\otimes I^\mfk(X_{\widehat{Y_{2,m}}})$$
    $$+ \sum_{m_1 = 1}^{t - 2}\sum_{m_2 = m_1 + 2}^{t} I^\afk_{(x_1;x_{s+m_1+1})}(X_{2,s+m_1})I^\afk_{(x_{s+m_2-1};x_{s+t+1})}(Y_{3,m_1,m_2}\setminus X_{2,s+m_1})\otimes I^\mfk(X_{\widehat{Y_{3,m}}}).$$
    where $X_{a,b} = \{x_a, x_{a+1}, \ldots, x_{b-1}, x_b\}$.
    By Theorem \ref{thmSY}, we can compute the motivic coaction of some types of SMZVs using Theorem \ref{thm2}.
\end{eg}

\begin{eg}
    Let us consider the case
    $$X =
    \begin{xy}
        {(0,-6) \ar @{{*}-o} (4,-2)},
        {(4,-2) \ar @{o.o} (8,2)},
        {(8,2) \ar @{o-o} (12,6)},
        {(12,6) \ar @{o-o} (16,2)},
        {(16,2) \ar @{o.o} (20,-2)},
        {(20,-2) \ar @{o-{*}} (24,-6)},
        {(0,-5) \ar @/^2mm/ @{-}^{n} (7,2)},
        {(17,2) \ar @/^2mm/ @{-}^{m} (24,-5)}
    \end{xy}.$$
    In other words,
    \begin{align*}
        X & \coloneqq \{x_1, \ldots, x_n, x_{n+1}, x_{1}', \ldots, x_{m}'\}\\
        \preceq & \coloneqq \{(x_a, x_b)\,|\,a\leq b\}\cup \{(x_{a}', x_{b}')\,|\,a\leq b\}\cup \{(x_{a}', x_{n+1})\,|\,a \in \{1, \ldots, m\}\}\\
        \delta(x) & \coloneqq \begin{cases}
        1 \text{ if } x \in \{x_{1}, x_1'\}\\
        0 \text{ others.}
    \end{cases}
    \end{align*}
    By Definition \ref{def}, we have
    $$I^\mfk(X) = \sum_{i=1}^{n}\binom{n-i+m}{n-i}\zeta^\mfk(i,n-i+m+1) + \sum_{j=1}^{m}\binom{n+m-j}{m-j}\zeta^\mfk(j,n+m-j+1).$$
    Let us calculate $\Delta(I^\mfk(X))$ using Theorem \ref{thm2}. First, using Proposition \ref{propMI} it is easy to show the formula
    $$I^\mfk(0;\{0\}^a,1,\{0\}^b;1) = (-1)^{a+1}\binom{a+b}{a}\zeta^\mfk(a+b+1),$$
    and using $\binom{i}{k} = \binom{i-1}{k} + \binom{i-1}{k-1}$, it is easy to show the formula
    $$\sum_{k = 0}^{i} (-1)^{i-k+1} \frac{m}{m+k}\binom{i}{k} = \frac{(-1)^{i+1}}{\binom{i+m}{m}}.$$
    Next, using the fact in Remark \ref{rem}, we get
    $$\Delta(I^\mfk(X)) = \sum_{Y\subset X} \Delta_Y(I^\mfk(X)).$$
    Finally, using Theorem \ref{thm2}, Proposition \ref{propYI} and (\ref{eq:01}), we see that $\Delta_Y$ vanishes except for the following two types of $Y$'s.
    \begin{enumerate}[Type 1.]
        \item $Y_{1,i} = \{x_i, \ldots, x_{n+1}, x_{1}', \ldots, x_{m}'\}$.
        \item $Y_{2,j} = \{x_1, \ldots, x_{n+1}, x_{j}', \ldots, x_{m}'\}$.
    \end{enumerate}
    Since $I^\mfk(X_{\widehat{Y_{1,i}}})$ and $I^\mfk(X_{\widehat{Y_{2,j}}})$ are single zeta values, hence we can express the formula of $\Delta(I^\mfk(X))$ in terms of two types of $Y$ and single zeta values as
    $$\Delta(I^\mfk(X)) = 1\otimes I^\mfk(X) + I^\afk(X) \otimes 1 + \sum_{i = 3}^{n+1} I^\afk(Y_{1,i})\otimes (-\zeta^\mfk(i-1)) + \sum_{j = 3}^{m + 1} I^\afk(Y_{2,j}) \otimes (-\zeta^\mfk(j-1)).$$
    By Definition \ref{def} and the formula above we have
    \begin{align*}
        I^\afk(Y_{1,n+1-i}) & = \sum_{k = 0}^{i} \binom{m-1+k}{k} I^\afk(0;\{0\}^{i-k},1,\{0\}^{m+k};1)\\
        & = \sum_{k = 0}^{i} \frac{m}{m+k}\binom{m+k}{k} (-1)^{i-k+1}\binom{m+i}{i-k}\zeta^\afk(m+i+1)\\
        & = \binom{m+i}{m} \left( \sum_{k = 0}^{i} (-1)^{i-k+1} \frac{m}{m+k}\binom{i}{k} \right) \zeta^\afk(m+i+1)\\
        & = \binom{m+i}{m} \frac{(-1)^{i+1}}{\binom{i+m}{m}} \zeta^\afk(m+i+1)\\
        & = (-1)^{i+1}\zeta^\afk(m+i+1).
    \end{align*}
    Hence, we get
    \begin{align*}
        \Delta(I^\mfk(X)) = & 1\otimes I^\mfk(X) + I^\afk(X) \otimes 1\\
        & + \sum_{i = 0}^{n - 2} (-1)^{i}\zeta^\afk(m+i+1) \otimes \zeta^\mfk(n-i)\\
        & + \sum_{j = 0}^{m - 2} (-1)^{j}\zeta^\afk(n+j+1) \otimes \zeta^\mfk(m-j).
    \end{align*}
\end{eg}

\end{document}